\newtheorem{theorem}{Theorem}[section]
\newtheorem{lemma}[theorem]{Lemma}
\newtheorem{remark}[theorem]{Remark}
\numberwithin{equation}{section}
\numberwithin{table}{section}
\numberwithin{figure}{section}
\begin{document}
\def\O{\Omega}
\def\R{\mathbb{R}}
\def\p{\partial}
\def\argmin{\mathop{\rm argmin}}
\def\ES{H^2_0(\O)}
\def\cN{\mathcal{N}}
\def\cA{\mathcal{A}}
\def\fA{\mathfrak{A}}
\def\cI{\mathcal{I}}
\def\tTh{\tilde T_h}
\def\tcN{\widetilde\cN}
\def\tV{\widetilde V}
\def\tA{\tilde A}
\def\BOL{B_{\scriptscriptstyle\rm OL}}
\def\BTL{B_{\scriptscriptstyle\rm TL}}
\def\red{\color{red}}
\title[Additive Schwarz Preconditioners for the Obstacle Problem of Plates]
{Additive Schwarz Preconditioners for the Obstacle Problem of  Clamped Kirchhoff Plates}
\author{Susanne C. Brenner}
\address{Department of Mathematics and Center for Computation and Technology,
Louisiana State University, Baton Rouge, LA 70803}
\email{brenner@math.lsu.edu}
\author{Christopher B. Davis}
\address{Department of Mathematics, Tennessee Technological University,
 Cookeville TN 38505}
 \email{cbdavis@tntech.edu}
\author{Li-yeng Sung}
\address{Department of Mathematics and Center for Computation and Technology,
Louisiana State University, Baton Rouge, LA 70803}
\email{sung@math.lsu.edu}
\thanks{The work of the first and third authors was supported in part
 by the National Science Foundation under Grant No.
 DMS-16-20273. Portions of this research were conducted with high performance computing resources
 provided by Louisiana State University (http://www.hpc.lsu.edu)}
\begin{abstract}
 When the obstacle problem of clamped Kirchhoff plates is discretized by
 a partition of unity method, the resulting discrete variational inequalities can be solved by
 a primal-dual active set algorithm.   In this paper we
 develop and analyze additive Schwarz preconditioners for the systems that appear
 in each iteration of the  primal-dual active set algorithm.  Numerical results that corroborate
 the theoretical estimates are also presented.
\end{abstract}

\date{September 3, 2018}
\maketitle
\section{Introduction}\label{sec:Introduction}
 Let $\O$ be a bounded polygonal domain in $\R^2$, $f\in L_2(\O)$
 and $\psi\in C(\bar\O)\cap C^2(\O)$ such that
 $\psi<0$ on $\p\O$.
  The obstacle problem for a clamped
 Kirchhoff plate occupying $\O$ is to find
\begin{equation}\label{eq:OP}
 u=\argmin_{v\in K}\Big[\frac12 a(v,v)-(f,v)\big],
\end{equation}
 where $(\cdot,\cdot)$ is the inner product for $L_2(\O)$,
\begin{align}\label{eq:aDef}
 a(v,w)=\int_\O D^2 v: D^2 w\, dx
 =\int_\O \sum_{i,j=1}^2 \Big(\frac{\p^2 v}{\p x_i\p x _j}\Big)
    \Big(\frac{\p^2 w}{\p x_i\p x _j}\Big)dx\qquad\forall\,v,w\in \ES,
\end{align}
 and $K$ is the subset of $\ES$ defined by
\begin{equation}\label{eq:KDef}
 K=\{v\in\ES:\, v\geq \psi \;\text{on}\;\O\}.
\end{equation}
 Here and
 throughout the paper we follow the standard notation for differential operators, function spaces and
 norms that can be found for example in \cite{Ciarlet:1978:FEM,ADAMS:2003:Sobolev,BScott:2008:FEM}.
\par
 Since $K$ is a nonempty closed convex subset of the Hilbert space $\ES$,
 it follows from the standard theory of calculus of variations
  \cite{ET:1999:Convex,KS:1980:VarInequalities} that
 the obstacle problem \eqref{eq:OP}
  has a unique solution $u\in K$ characterized by the fourth order variational inequality
\begin{equation}\label{eq:VI}
  a(u,v-u)-(f,v-u)\geq 0 \qquad\forall\,v\in K.
\end{equation}
\par
 The numerical solution of the obstacle problem \eqref{eq:OP}--\eqref{eq:KDef} by a
 generalized finite element method was studied  in \cite{BDS:2014:GFEM}.  The discrete variational
 inequalities resulting from the generalized finite element method were solved by
 a primal-dual active set algorithm \cite{BIK:1999:PDAS,BK:2002:PDAS,HIK:2003:PDAS,IK:2008:Lagrange},
 where an auxiliary system of equations involving the inactive nodes had to be solved in each iteration.
 Since this is a fourth order problem, these systems become very ill-conditioned
  when the number of degrees of freedom becomes large.  The goal of this paper is to develop
  one-level and two-level additive Schwarz
 domain decomposition preconditioners for
 the systems that appear in the primal-dual active set algorithm.
 We note that a
 two-level additive Schwarz preconditioner for the plate bending problem (without the obstacle)
 using the same generalized finite element method  was investigated in \cite{BDS:2017:TLDDPUM}.
\par
 There is a sizable literature on
 domain decomposition methods for second order variational inequalities  \cite{KNT:1992:Obstacle,ZZ:1998:Obstacle,BW:2000:AS,Tai:2001:Obstacle,TT:2002:Convex,
 TX:2002:Convex,Tai:2003:MGVI,BTW:2003:MS,Mathew:2008:DD,
 CZ:2008:Obstacle,BK:2012:ADSVI,Lee:2013:DDVI}.  (References for related multigrid methods can be found
 in the survey article \cite{GK:2009:MGObstacle}.)
  On the other hand
 the literature on domain decomposition methods for fourth order variational inequalities is
 quite small.  The only work \cite{Scarpini:1990:Biharmonic} that we know of
  treats an alternating Schwarz algorithm for the plate
 obstacle problem discretized by a mixed finite element method.
\par
 We note that most of the domain decomposition algorithms for variational inequalities are based on the
 subspace correction approach except the one in \cite{Lee:2013:DDVI}, where the author considered a multibody
 second order elliptic problem with inequality constraints on the interfaces of the bodies, and
 the nonoverlapping domain decomposition preconditioners in that paper
 are also designed for the auxiliary systems that appear in a
 primal-dual active set algorithm.
\par
 The rest of the paper is organized as follows.  We recall the partition
 of unity method in Section~\ref{sec:PTUM} and the primal-dual active set algorithm
  in Section~\ref{sec:PDAS}.  We set up overlapping domain decomposition in Section~\ref{sec:DD}
  and study the one-level and two-level additive Schwarz preconditioners in Section~\ref{sec:OneLevel}
  and Section~\ref{sec:TwoLevel}.  Numerical results that corroborate the theoretical estimates
  are presented in Section~\ref{sec:Numerics} and we end with some concluding remarks in
   Section~\ref{sec:Conclusions}.
\section{A Partition of Unity Method}\label{sec:PTUM}
 Conforming finite element methods for the fourth order problem \eqref{eq:OP}--\eqref{eq:KDef}
  require $C^1$ finite element spaces.
 In the classical setting this would involve polynomials of high degrees in the construction of
 the local approximation spaces \cite{Ciarlet:1978:FEM,BScott:2008:FEM}.
 An alternative is to employ
 generalized finite element methods \cite{MB:1996:PTU,BBO:2003:GFEM}.
 This was carried out in \cite{BDS:2014:GFEM} using a flat-top partition of unity method (PUM)
 from \cite{GS:2002:PUM,OKH:2008:PTU,ODJ:2011:Plate,Davis:2014:PTU}.
 Below we recall some basic facts concerning the PUM in \cite{BDS:2014:GFEM}.
\par
 Let $\{\O_i\}_{i=1}^n$ be an open cover of $\bar\O$
 such that there exists a collection of nonnegative functions
  $\phi_1,\ldots,\phi_n\in W^2_\infty(\R^2)$
 with the following properties:
\begin{alignat*}{3}
 \text{supp}\,\phi_i&\subset \O_i &\qquad&\text{for}\;1\leq i\leq n,\\
 \sum_{i=1}^n\phi_i&=1   &\qquad&\text{on}\;\O,\\
 |\phi_i|_{W^m_\infty(\R^2)}&\leq \frac{C}{(\text{diam}\,\O_i)^m} &\qquad&\text{for}\;
    0\leq m\leq 2,\;1\leq i\leq n.
\end{alignat*}
 From here on we use $C$ (with or without subscript) to denote a generic positive constant
 that can take different values at different appearances.
\par
 Let $V_i$ be a subspace of biquadratic polynomials defined on the local patch $\O_i$ whose members
 satisfy the homogeneous Dirichlet boundary conditions on $\p\O$.  The generalized finite element space
 $V_G\subset\ES$ is given by
\begin{equation*}
  V_G=\sum_{i=1}^n \phi_iV_i.
\end{equation*}
\par
 There are many choices in the construction of the partition of unity.  We use a  flat-top partition of
 unity where each $\O_i$ is an open rectangle and each $\phi_i$ is the tensor product of two
 one dimensional flat-top functions.   The flat-top region $\O_i^{\rm flat}$ inside $\O_i$ is the set where
 $\phi_i=1$, and the degrees of freedom for the local space $V_i$ are all associated with nodes on
 $\O_i^{\rm flat}$. An illustration for such a construction  is given in Figure~\ref{Fig:PTU} for a
 square domain $\O$.  Details for the construction and examples for other domains can be found
  in \cite{BDS:2014:GFEM}.
\begin{figure}[!htb]
\centering
\subfigure[]{
   \includegraphics[width=0.25\textwidth]{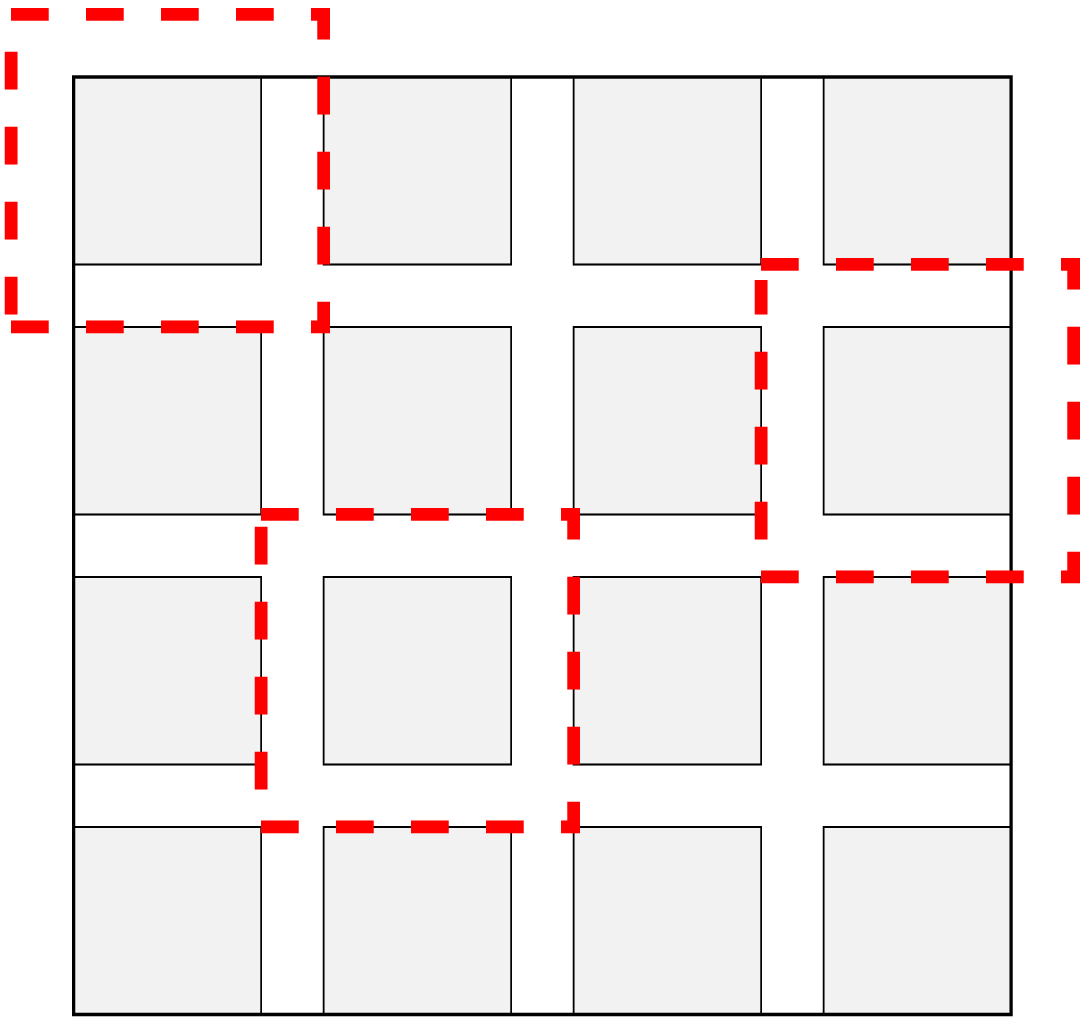}}
\hspace{30pt}
\subfigure[]{
   \includegraphics[width=0.25\textwidth]{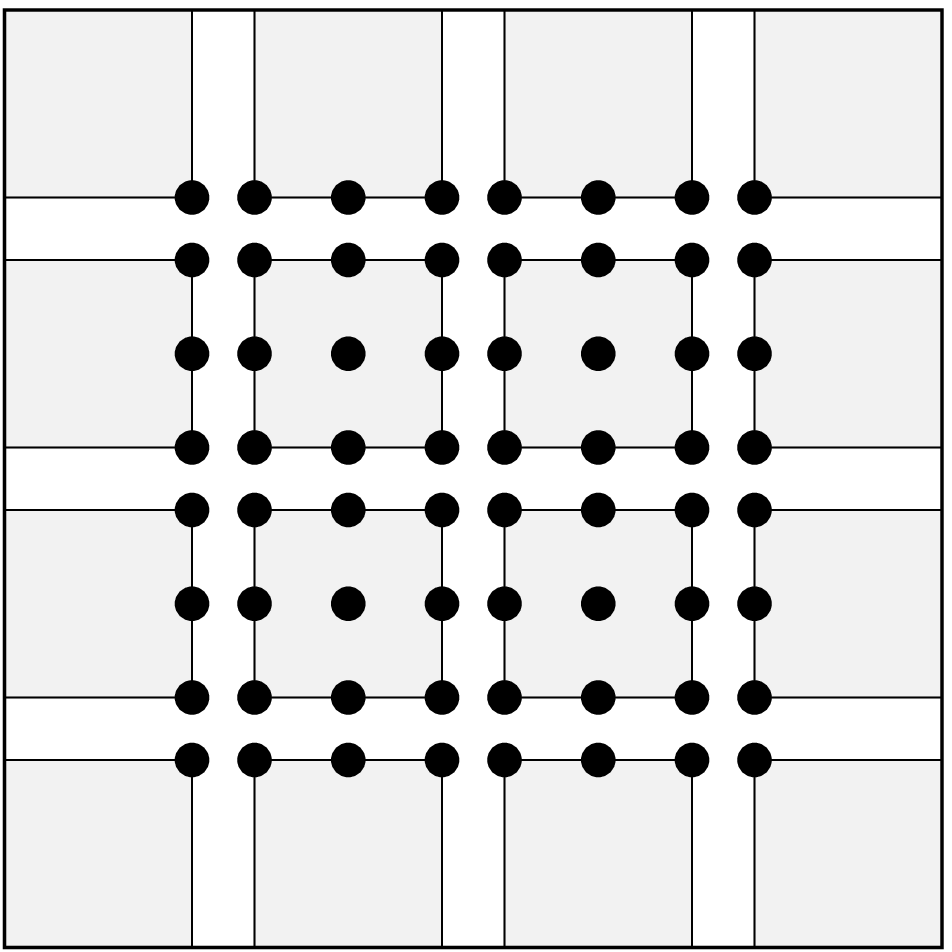}}
\caption{(a) $\O_i$ (bounded by dotted lines) and $\O_i^{\rm flat}$ (shaded in grey)
(b) nodes for the interior DOFs}
\label{Fig:PTU}
\end{figure}
\par
 From now on we assume that the diameters of the patches are
  comparable to a mesh size $h$ and denote the
 generalized finite element space by $V_h$.
 Let $\cN_h$ be the set of the nodes
  in the local patches (solid dots in Figure~\ref{Fig:PTU}~(b)) that correspond to the degrees of freedom
  of the local basis functions.  (The cardinality of $\cN_h$ is the dimension of $V_h$.)
 The discrete problem is to find
\begin{equation}\label{eq:DP}
  u_h=\argmin_{v\in K_h}\Big[\frac12 a(v,v)-(f,v)\Big],
\end{equation}
 where
\begin{equation}\label{eq:KhDef}
 K_h=\{v\in V_h:\,v(p)\geq \psi(p) \quad\forall\,p\in \cN_h\}.
\end{equation}
\begin{remark}\label{rem:Box}\rm
 Since the nodes in $\cN_h$ are located at the flat-top regions of the local patches, the constraints for $K_h$ are
  box constraints.
\end{remark}
\par
 Let the interpolation operator $\Pi_h:\ES\longrightarrow V_h$ be defined by
\begin{equation}\label{eq:Pih}
  \Pi_h \zeta=\sum_{i=1}^n (\Pi_i \zeta)\phi_i,
\end{equation}
 where $\Pi_i$ is the local nodal interpolation operator for $V_i$.
 Then $\Pi_h u$ belongs to $K_h$ and hence
 $K_h$ is a nonempty closed convex subset of $V_h$.
 It follows from the standard theory that
 \eqref{eq:DP} has a unique solution $u_h\in K_h$ characterized by the discrete variational
 inequality
\begin{equation}\label{eq:DVI}
 a(u_h,v-u_h)-(f,v-u_h)\geq0 \qquad\forall\,v\in K_h.
\end{equation}
 Moreover we have \cite[Theorem~3.2]{BDS:2014:GFEM}
\begin{equation*}
  |u-u_h|_{H^2(\O)}\leq Ch^\alpha,
\end{equation*}
 where the index of elliptic regularity $\alpha\in (\frac12,1]$ is determined by the angles at the corners
 of $\O$ and we can take $\alpha$ to be $1$ if $\O$ is convex.
\par
 We will need the  following interpolation error estimate
  \cite[(2.9)]{BDS:2014:GFEM} in the analysis of the domain decomposition
 preconditioners:
\begin{equation}\label{eq:InterpolationErrors}
  \sum_{m=0}^1 h^m|\zeta-\Pi_h\zeta|_{H^m(\O)}
  +h^2|\Pi_h\zeta|_{H^2(\O)}\leq C_{\Pi} h^2|\zeta|_{H^2(\O)} \qquad
  \forall\,\zeta\in H^2(\O),
\end{equation}
 where the positive constant $C_\Pi$ is independent of $h$.
\par
 We will also need the trivial estimate
\begin{equation}\label{eq:LTwo}
  \|v\|_{L_2(\O)}\approx \sum_{p\in\cN_h}v^2(p) \qquad\forall\,v\in V_h
\end{equation}
 that follows from standard estimates for the biquadratic polynomials defined over the patches.
%
\section{A Primal-Dual Active Set Algorithm}\label{sec:PDAS}
 Let the function $\lambda_h:\cN_h\longrightarrow\R$ be defined by
\begin{equation}\label{eq:lambdah}
  a(u_h,v)-(f,v)=\sum_{p\in\cN_h}\lambda_h(p)v(p) \qquad\forall\,v\in V_h.
\end{equation}
 The discrete variational inequality \eqref{eq:DVI} is equivalent to \eqref{eq:lambdah}
together with the optimality conditions
\begin{equation*}
  u_h(p)-\psi(p)\geq 0, \quad \lambda_h(p)\geq0 \quad\text{and}\quad
  (u_h(p)-\psi(p))\lambda_h(p)=0 \qquad \forall\,p\in\cN_h,
\end{equation*}
 which can also be written concisely  as
\begin{equation}\label{eq:Nonsmooth}
 \lambda_h(p)=\max\big(0,\lambda_h(p)+c(\psi(p)-u_h(p))\big)\qquad\forall\,p\in\cN_h.
\end{equation}
 Here $c$  can be any positive number.
\par
 The system defined by
 \eqref{eq:lambdah} and \eqref{eq:Nonsmooth} can be solved by a semi-smooth Newton iteration
 that is equivalent to a primal-dual active set method
 \cite{BIK:1999:PDAS,BK:2002:PDAS,HIK:2003:PDAS,IK:2008:Lagrange}.
 Given an approximation $(u_k,\lambda_k)$ of $(u_h,\lambda_h)$, the semi-smooth Newton iteration
 obtains the next approximation by solving the following system of equations:
\begin{subequations}\label{subseq:Newton}
\begin{alignat}{3}
  a(u_{k+1},v)-(f,v)&=\sum_{p\in\cN_h}\lambda_{k+1}(p)v(p) &\qquad&\forall\,v\in V_h,\label{eq:N1}\\
   u_{k+1}(p)&=\psi(p)&\qquad&\forall\,p\in\fA_k,\label{eq:N2} \\
   \lambda_{k+1}(p)&=0&\qquad&\forall\,p\in\cN_h\setminus\fA_k,\label{eq:N3}
\end{alignat}
 \end{subequations}
 where $\fA_k=\{p\in\cN_h: \lambda_k(p)+c(\psi(p)-u_k(p))>0\}$ is
 the active set determined by $(u_k,\lambda_k)$ and $c$ is a (large) positive constant.
   The  iteration terminates when $\fA_{k+1}=\fA_k$.
\par
   In view of  \eqref{eq:N2} and \eqref{eq:N3},  we can reduce \eqref{eq:N1} to an auxiliary system
    that only involves the
 unknowns $u_{k+1}(p)$ for $p\in \cN_h\setminus\fA_k$.  For small $h$, this is a large, sparse and
 ill-conditioned system that can be solved efficiently by a preconditioned Krylov subspace method,
 such as the  preconditioned conjugate gradient method.
\par
 This preconditioning problem can be posed in the following general form.
  Let $\tcN_h$ be a subset of $\cN_h$.  We define the truncation operator
  $\tTh:V_h\longrightarrow V_h$ by
\begin{equation}\label{eq:tThDef}
 (\tTh v)(p)=\begin{cases}
   v(p) &\qquad\text{if $p\in\tcN_h$}\\[4pt]
     0&\qquad\text{if $p\in\cN_h\setminus\tcN_h$}
 \end{cases}.
\end{equation}
 Then $\tTh$ is a projection from $V_h$ onto $\tV_h=\tTh V_h$.
 \par
 Let $\tA_h:\tV_h\longrightarrow \tV_h'$ be defined by
\begin{equation}\label{eq:tAhDef}
  \langle\tA_h v,w\rangle=a(v,w)\qquad\forall\,v,w\in \tV_h,
\end{equation}
 where $\langle\cdot,\cdot\rangle$ is the canonical bilinear form on $V_h'\times V_h$.
 We want to construct preconditioners for $\tA_h$
 whose performance is independent of $\tcN_h$.
  Since the partition of unity method is defined in terms of overlapping patches,
  it is natural to consider additive Schwarz domain decomposition preconditioners
 \cite{DW:1987:AS}.
\par
 Note that \eqref{eq:LTwo} implies
\begin{equation}\label{eq:TrucationEstimate}
  \|\tTh v\|_{L_2(\O)}\leq C\|v\|_{L_2(\O)} \qquad\forall\,v\in V_h.
\end{equation}
\section{Domain Decomposition}\label{sec:DD}
 Let the subdomains
 $\{D_j\}_{j=1}^J$ form an overlapping
 domain decomposition of $\O$ such that
\begin{equation}\label{eq:diamDj}
  \text{diam}\,D_j\approx H \qquad \text{for}\;1\leq j\leq J,
\end{equation}
 and
\begin{equation}\label{eq:Nc}
  \text{any point in $\O$ can belong to at most $N_c$ many subdomains}.
\end{equation}
 We also assume that the boundaries of $D_1,\ldots,D_J$ are aligned with the boundaries of the
 patches underlying the generalized finite element space $V_h$.  An example of four overlapping subdomains
 for a square domain $\O$ is depicted in Figure~\ref{Fig:DD}.  Details for the construction of
 $D_1,\ldots,D_J$ are available in \cite{BDS:2017:TLDDPUM}.
\begin{figure}[htb]
  \includegraphics[width=.22\textwidth]{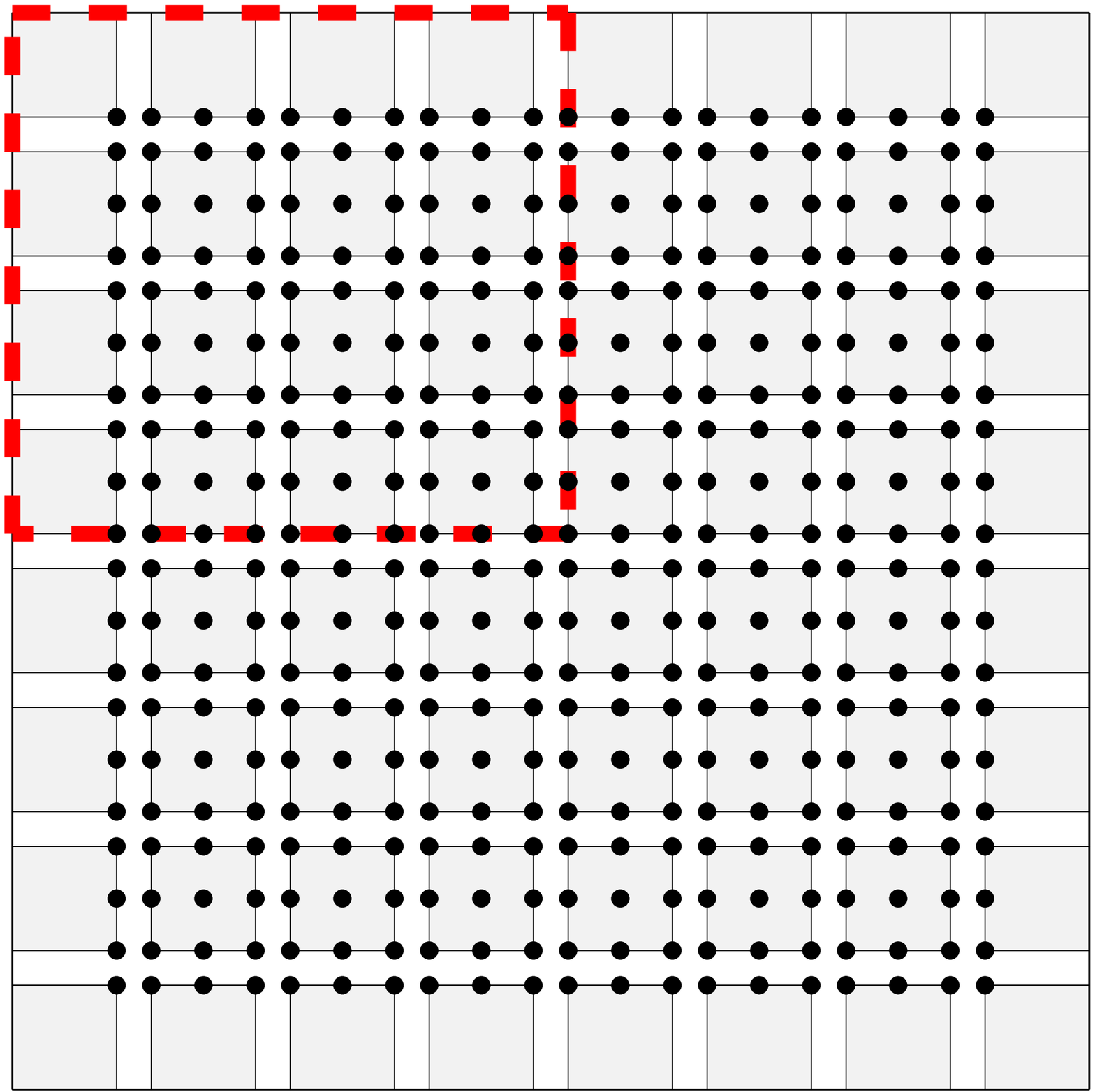}
  \hspace{8pt}
  \includegraphics[width=.22\textwidth]{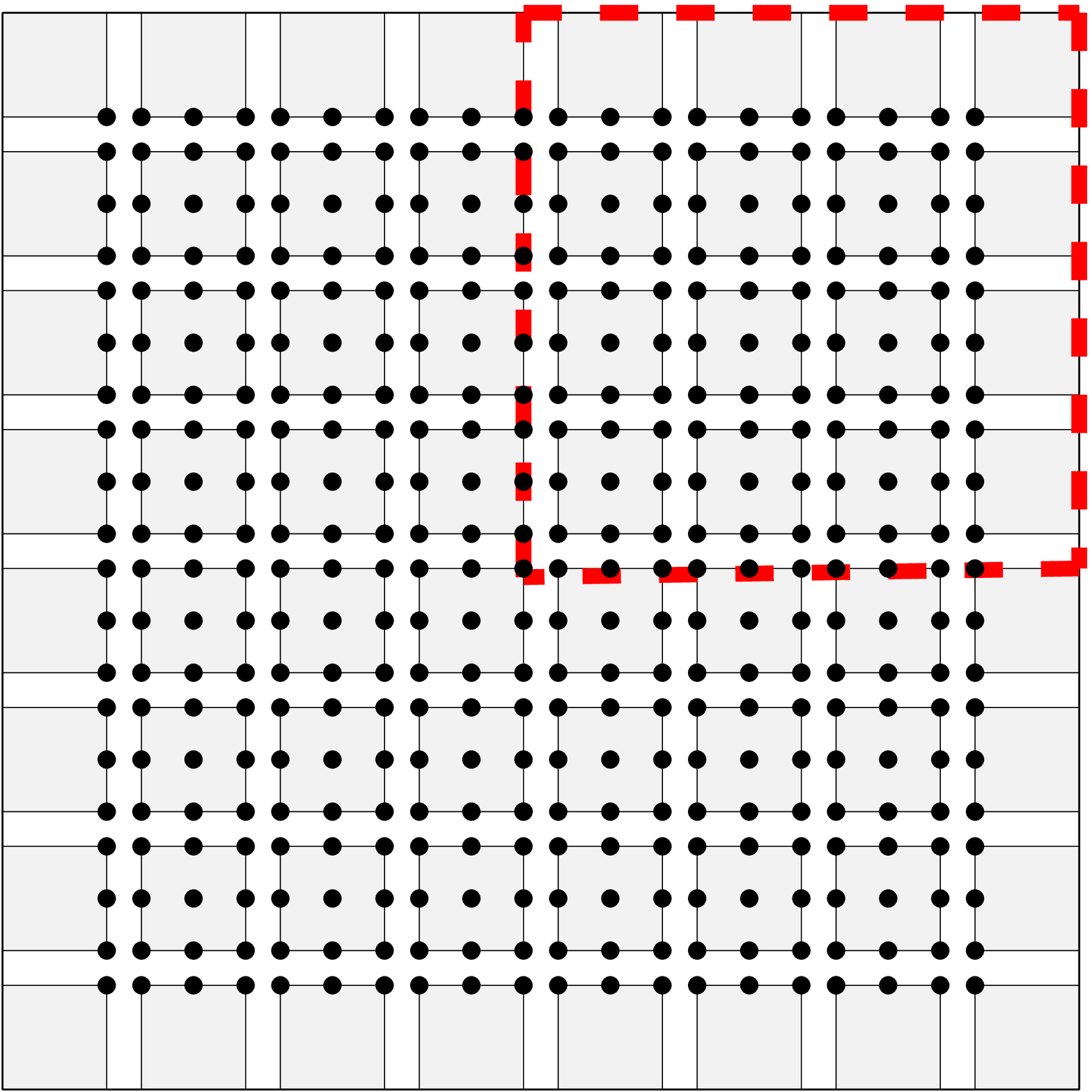}
   \hspace{8pt}
  \includegraphics[width=.22\textwidth]{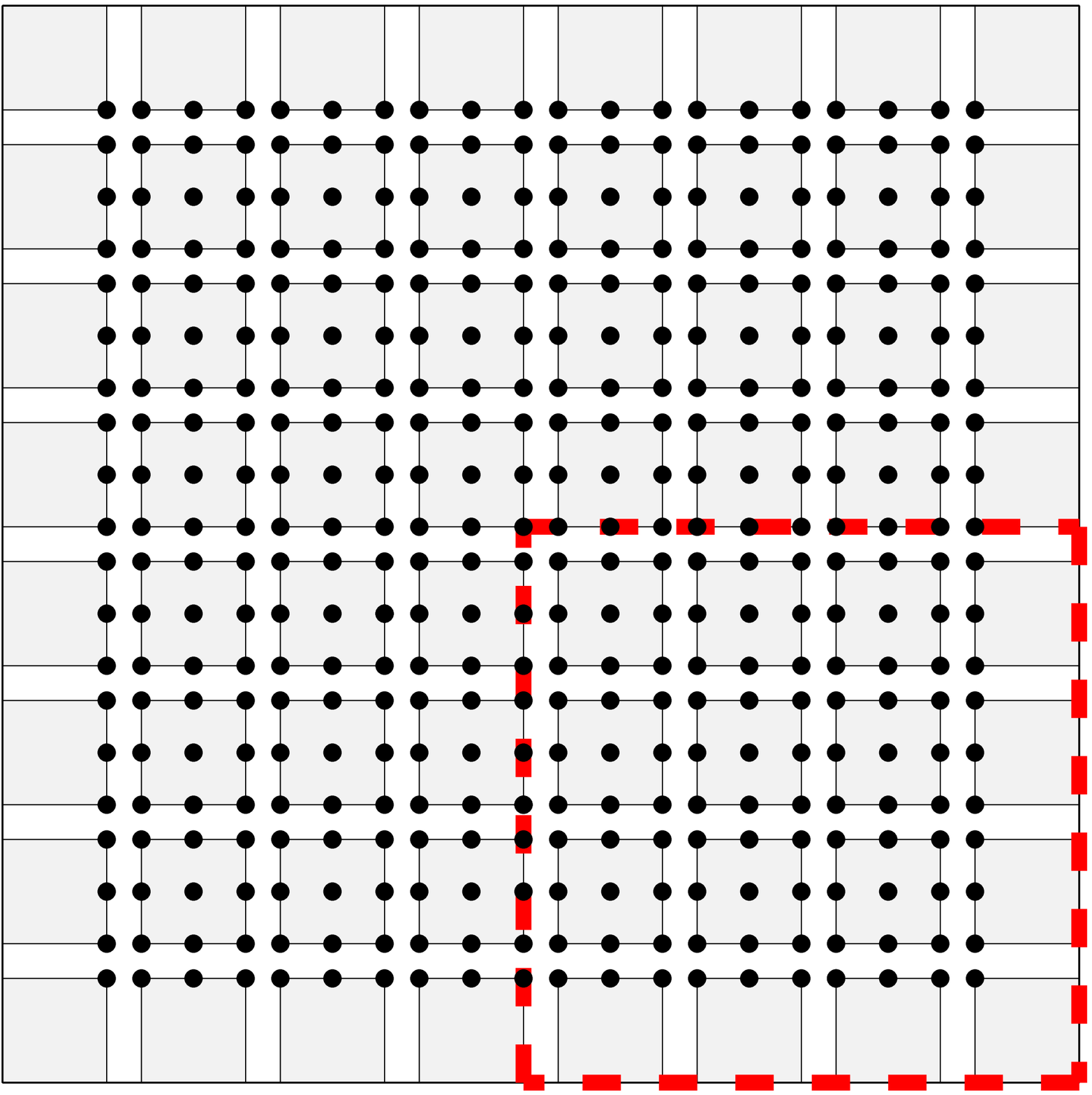}
   \hspace{8pt}
  \includegraphics[width=.22\textwidth]{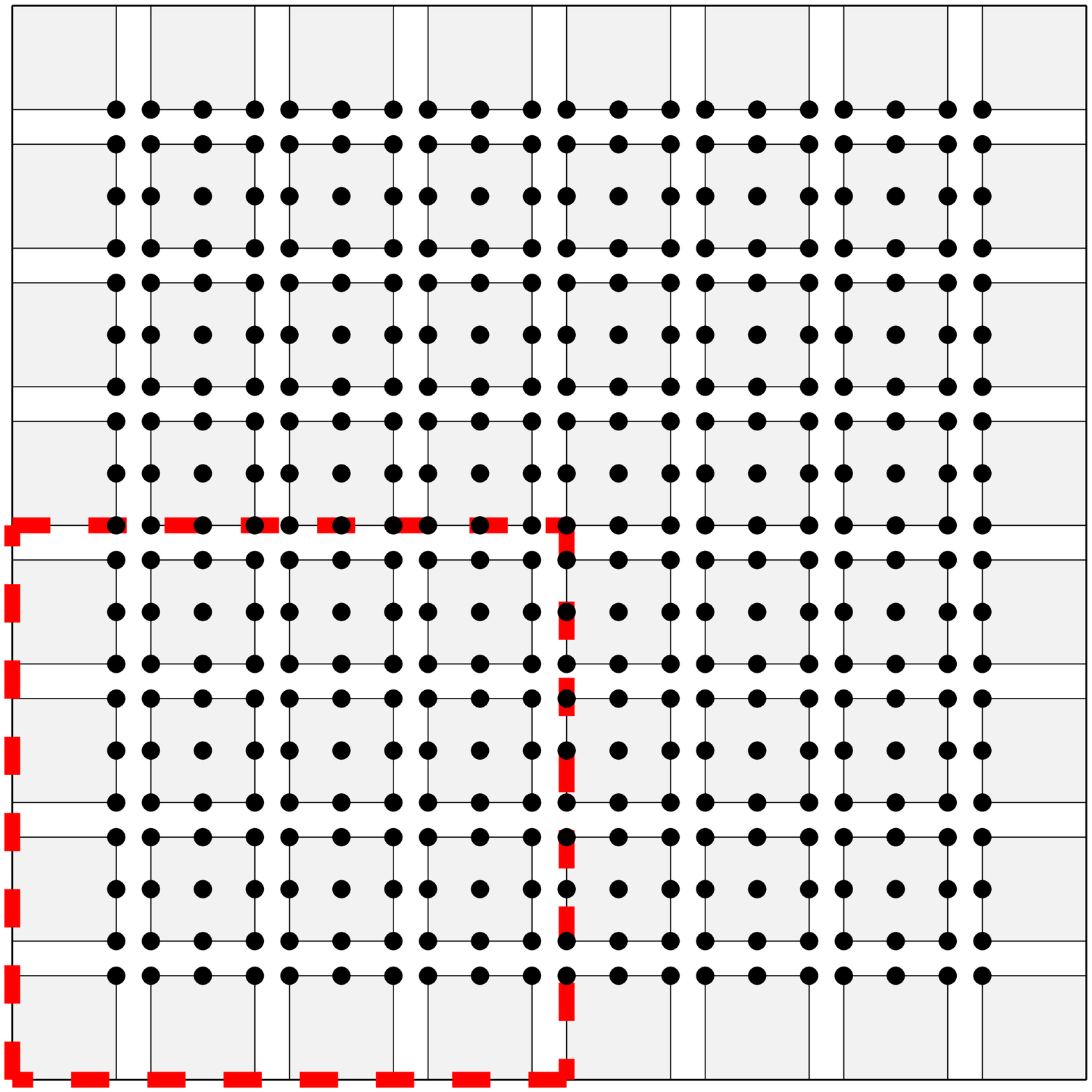}
\caption{Domain decomposition for a square domain $\O$
with four overlapping subdomains (bounded by the dotted lines)}
\label{Fig:DD}
\end{figure}
\par
 Note that \eqref{eq:diamDj} implies
\begin{equation}\label{eq:JandH}
  J\approx H^{-2},
\end{equation}
 provided that the subdomains $D_1,\ldots,D_J$ are shape regular.
\par
 We assume that there exists a partition of  unity
 $\psi_1,\ldots,\psi_J\in W^2_\infty(\R^2)$
  with the following properties:
\begin{subequations}\label{subeqs:PTU}
\begin{alignat}{3}
  \sum_{j=1}^J \psi_j&=1 &\qquad& \text{on} \; \bar\O,\\
  \psi_j&\geq 0 &\qquad &\text{for } j=1,\ldots J,\\
  \psi_j&=0 &\qquad&\text{on }\O\setminus\bar D_j,\; 1\leq j\leq J,\\
  |\psi_j|_{W^k_\infty(\O)}&\leq C_\dag \delta^{-k}&\qquad&
   \text{for } j=1,\ldots,J \text{ and } k=0,1,2.\label{eq:PTUBdd}
\end{alignat}
\end{subequations}
 Here $\delta$ $(\geq h)$ measures the overlap among the subdomains $D_1,\ldots,D_J$.

%
\section{A One-Level Additive Schwarz Preconditioner}\label{sec:OneLevel}
 Let $\tV_j$ be the subspace of $\tV_h$ whose members vanish at all the nodes
 outside $D_j$ and
 $A_j:\tV_j\longrightarrow \tV_j'$ be defined by
\begin{equation}\label{eq:AjDef}
 \langle A_jv,w\rangle=a(v,w) \qquad\forall\,v,w\in \tV_j.
\end{equation}
 The one-level additive Schwarz preconditioner $\BOL:\tV_h'\longrightarrow \tV_h$ is defined by
\begin{equation}\label{eq:BOL}
  \BOL=\sum_{j=1}^J I_j A_j^{-1}I_j^t,
\end{equation}
 where $I_j:\tV_j\longrightarrow \tV_h$ is the natural injection.
 \begin{theorem}\label{thm:OneLevel}
 We have
\begin{equation}\label{eq:OneLevelEstimate}
 \kappa(\BOL\tA_h)=\frac{\lambda_{\max}(\BOL\tA_h)}{\lambda_{\min}(\BOL\tA_h)}
 \leq C\delta^{-4},
\end{equation}
 where $\delta$ $(\geq h)$ measures the overlap among the subdomains $D_1,\ldots, D_J$ and the positive
 constant $C$ is independent of $h$, $H$, $J$, $\delta$ and $\tcN_h$.
\end{theorem}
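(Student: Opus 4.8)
The plan is to apply the abstract theory of additive Schwarz preconditioners \cite{DW:1987:AS}, for which it suffices to establish two facts: an upper bound $\lambda_{\max}(\BOL\tA_h)\le C_1$, and the existence of a stable decomposition with constant $C_0$, so that $\lambda_{\min}(\BOL\tA_h)\ge C_0^{-2}$ and hence $\kappa(\BOL\tA_h)\le C_1C_0^2$. The bound on $\lambda_{\max}$ is the routine half: since $a(I_jv,I_kw)=0$ whenever $\bar D_j\cap\bar D_k=\emptyset$, and since \eqref{eq:Nc}, \eqref{eq:diamDj} and shape regularity limit the number of subdomains that can meet any fixed $D_j$, a standard coloring argument gives $a(\BOL\tA_h v,v)\le CN_c\,a(v,v)$ for all $v\in\tV_h$, hence $\lambda_{\max}(\BOL\tA_h)\le CN_c$ with $N_c$ independent of $h$, $H$, $J$, $\delta$ and $\tcN_h$. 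All the work is therefore in showing that every $v\in\tV_h$ admits a decomposition $v=\sum_{j=1}^J v_j$ with $v_j\in\tV_j$ and $\sum_{j=1}^J a(v_j,v_j)\le C\delta^{-4}a(v,v)$, which yields $C_0^2\le C\delta^{-4}$.

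For the decomposition I would take $v_j=\Pi_h(\psi_j v)$, $1\le j\le J$, where $\Pi_h$ is the interpolation operator \eqref{eq:Pih} and $\{\psi_j\}$ is the subdomain partition of unity of \eqref{subeqs:PTU}. The key observation is that $\Pi_h$ reproduces nodal values, i.e., $(\Pi_h\zeta)(p)=\zeta(p)$ for every $p\in\cN_h$, because the nodes lie in the flat-top regions where exactly one $\phi_i$ equals $1$ and the others vanish; consequently $v_j(p)=\psi_j(p)v(p)$ for all $p\in\cN_h$. Since the patch boundaries are aligned with $\p D_j$ and the nodes lie in the flat-top regions, any node $p\notin D_j$ satisfies $p\notin\bar D_j$, so $\psi_j(p)=0$ and $v_j(p)=0$; and since $v\in\tV_h$, we also have $v_j(p)=\psi_j(p)v(p)=0$ for every $p\in\cN_h\setminus\tcN_h$. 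Hence $v_j\in\tV_j$, with no truncation by $\tTh$ required. Moreover, by the linearity of $\Pi_h$, the identity $\sum_{j=1}^J\psi_j\equiv1$, and the reproduction property $\Pi_h v=v$ for $v\in V_h$, we obtain $\sum_{j=1}^J v_j=\Pi_h v=v$.

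It remains to estimate $\sum_{j=1}^J a(v_j,v_j)=\sum_{j=1}^J|v_j|_{H^2(\O)}^2$. The interpolation error estimate \eqref{eq:InterpolationErrors}, applied to $\psi_j v\in H^2(\O)$, gives $|v_j|_{H^2(\O)}\le C_\Pi|\psi_j v|_{H^2(\O)}$, and since $\psi_j v$ is supported in $\bar D_j$, the Leibniz rule together with the derivative bounds \eqref{eq:PTUBdd} for $\psi_j$ yields
\begin{equation*}
 |\psi_j v|_{H^2(\O)}\le C\big(\delta^{-2}\|v\|_{L_2(D_j)}+\delta^{-1}|v|_{H^1(D_j)}+|v|_{H^2(D_j)}\big).
\end{equation*}
Squaring, summing over $j$, and using the finite overlap property \eqref{eq:Nc} to pass from $\sum_j(\cdot)_{D_j}$ to $N_c(\cdot)_\O$, we arrive at
\begin{equation*}
 \sum_{j=1}^J|v_j|_{H^2(\O)}^2\le CN_c\big(\delta^{-4}\|v\|_{L_2(\O)}^2+\delta^{-2}|v|_{H^1(\O)}^2+|v|_{H^2(\O)}^2\big).
\end{equation*}
Finally, since $v\in\tV_h\subset\ES$ the Poincar\'e--Friedrichs inequality bounds $\|v\|_{L_2(\O)}$ and $|v|_{H^1(\O)}$ by a fixed multiple of $|v|_{H^2(\O)}$, and because $\delta$ cannot exceed the subdomain diameter $\approx H$ and hence is bounded by a fixed multiple of $\text{diam}\,\O$, the factor $\delta^{-4}$ dominates the other two; thus $\sum_{j=1}^J a(v_j,v_j)\le C\delta^{-4}a(v,v)$, which is the required stable decomposition.

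The step I expect to be the genuine obstacle is keeping the local pieces inside $\tV_h$ without invoking $\tTh$. The truncation $\tTh$ is only $L_2$-stable, cf.\ \eqref{eq:TrucationEstimate}; estimating $|\tTh w|_{H^2(\O)}$ through an inverse inequality would cost a factor $h^{-4}$ rather than $\delta^{-4}$, which is far too lossy when $\delta\gg h$. The resolution, built into the choice above, is that $v\in\tV_h$ forces the products $\psi_j v$, and therefore the interpolants $\Pi_h(\psi_j v)$, to vanish at the nodes in $\cN_h\setminus\tcN_h$, so that $v_j$ lands in $\tV_j$ automatically. Once this is recognized, the remainder is the standard overlapping additive Schwarz argument, essentially as carried out for the plate bending problem in \cite{BDS:2017:TLDDPUM}.
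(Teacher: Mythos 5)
Your proposal is correct and follows essentially the same route as the paper: the standard coloring bound for $\lambda_{\max}$, the decomposition $v_j=\Pi_h(\psi_j v)$, and the chain of interpolation stability, Leibniz rule, finite overlap, and Poincar\'e--Friedrichs estimates leading to $\sum_j a(v_j,v_j)\le C\delta^{-4}a(v,v)$. The only difference is that you spell out why $v_j$ lands in $\tV_j$ without truncation (via the nodal reproduction property of $\Pi_h$ at the flat-top nodes), a point the paper asserts without comment.
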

\begin{proof} Let $v_j\in \tV_j$, $1\leq j\leq J$,  be arbitrary.  We have a standard estimate
\cite[Lemma~1]{BDS:2017:TLDDPUM}:
\begin{equation}\label{eq:OL1}
  a\Big(\sum_{j=1}^J I_jv_j,\sum_{j=1}^J I_jv_j\Big)\leq C_\sharp \sum_{j=1}^J a(v_j,v_j),
\end{equation}
 where the positive constant $C_\sharp$ only depends on $N_c$.
   It follows from the standard additive Schwarz theory
 \cite{SBG:1996:DD,TW:2005:DD,Mathew:2008:DD,BScott:2008:FEM} that
\begin{equation}\label{eq:OL2}
  \lambda_{\max}(\BOL\tA_h)\leq C_\sharp.
\end{equation}
\par
 Given any $v\in \tV_h$, we have
 $v_j=\Pi_h(\psi_j v)\in \tV_j$,
\begin{equation}\label{eq:OL3}
  \sum_{j=1}^J v_j=\Pi_h\Big[\Big(\sum_{j=1}^J \psi_j\Big)v\Big]=\Pi_h v=v,
\end{equation}
 and
\begin{align}\label{eq:OL4}
  \sum_{j=1}^J a(v_j,v_j)&=\sum_{j=1}^J a\big(\Pi_h(\psi_j v),\Pi_h(\psi_j v)\big)\notag\\
            &=\sum_{j=1}^J |\Pi_h(\psi_j v)|_{H^2(\O)}^2\notag\\
            &\leq  C\sum_{j=1}^J |\psi_j v|_{H^2(\O)}^2\notag\\
            &\leq C\sum_{j=1}^J\sum_{k=0}^2 |\psi_j|^2_{W^{2-k}_\infty(\O)}|v|_{H^k(D_j)}^2\\
            &\leq C\sum_{k=0}^2 \delta^{-2(2-k)}\sum_{j=1}^J |v|_{H^k(D_j)}^2\notag\\
            &\leq C\sum_{k=0}^2 \delta^{-2(2-k)}|v|_{H^k(\O)}^2\notag\\
           & \leq C_\flat \delta^{-4} |v|_{H^2(\O)}^2\notag\\
           &= C_\flat \delta^{-4} a(v,v),\notag
\end{align}
 where the positive constant $C_\flat$ depends only on
  $C_\Pi$ in \eqref{eq:InterpolationErrors},
  $N_c$ in \eqref{eq:Nc},
   $C_\dag$
 in \eqref{eq:PTUBdd} and constants for Poincar\'e-Friedrichs inequalities associated with $\ES$.
\par
 It follows from \eqref{eq:OL3}, \eqref{eq:OL4} and the standard additive Schwarz theory that
\begin{equation}\label{eq:OL5}
  \lambda_{\min}(\BOL\tA_h)\geq \delta^4 C_\flat^{-1}.
\end{equation}
\par
 The estimates \eqref{eq:OL2} and \eqref{eq:OL5} imply
 \eqref{eq:OneLevelEstimate} with $C=C_\sharp C_\flat$.
\end{proof}
\begin{remark}\label{rem:Same}\rm
  The estimate \eqref{eq:OneLevelEstimate} is identical to the one for the plate bending problem
  without an obstacle, i.e., the obstacle is invisible to the one-level additive Schwarz preconditioner.
\end{remark}
\begin{remark}\label{rem:Improvement1}\rm
 Under the assumption that the subdomains $D_1,\ldots,D_J$ are shape regular, we can
 improve the estimate \eqref{eq:OneLevelEstimate} to
\begin{equation}\label{eq:Improvement1}
 \kappa(\BOL\tA_h)\leq C\delta^{-3}H^{-1}
\end{equation}
 by the arguments in \cite[Section~8]{Brenner:1996:Plate}.  (Similar arguments for second order problems
 can be found in \cite[Lemma~3.10]{TW:2005:DD}.)  We will assume this is the case in
 the discussion below.
\end{remark}
\par
 Since $\delta$ decreases as $H$ decreases (or equivalently as $J$ increases), the one-level
 algorithm is not  a scalable algorithm.  Nevertheless the condition number estimate
 \eqref{eq:Improvement1} can still be a big improvement over the estimate
 $\kappa(\tA_h)\approx h^{-4}$ for the original system.
\subsection{Small Overlap}
 In the case of small overlap among the subdomains, we have $\delta\approx h$, and hence
\begin{equation}\label{eq:OLSmall1}
  \kappa(\BOL\tA_h)\leq Ch^{-3}H^{-1},
\end{equation}
 which indicates that asymptotically
\begin{align}
   &\text{$\kappa(\BOL\tA_h)$ will increase by a factor of $8$
 after each refinement if $H$ is kept fixed.} \label{eq:OLSmall2}\\
 \intertext{On the other hand the estimate \eqref{eq:OLSmall1} also indicates that}
  &\text{$\kappa(\BOL\tA_h)$ will increase by a factor of $2$ if $H$ decreases
    by a factor of 2}\label{eq:OLSmall3}\\
   &\text{ (or equivalently if $J$  increases by a factor of $4$) while $h$ is kept fixed.}\notag
\end{align}
%
\subsection{Generous Overlap}
 In the case of generous overlap among the subdomains, we have $\delta\approx H$, and hence
\begin{equation}\label{eq:OLLarge1}
  \kappa(\BOL\tA_h)\leq CH^{-4}\approx J^2.
\end{equation}
 It follows from  \eqref{eq:OLLarge1} that
\begin{align}
 &\text{$\kappa(\BOL\tA_h)$ increases as $J$ increases,}\label{eq:OLLarge2}\\
 \intertext{and}
  &\text{$\kappa(\BOL\tA_h)$ remains constant as $h$ decreases provided $H$
  (equivalently $J$)}\label{eq:OLLarge3}\\
    &\text{ is kept fixed.}\notag
 \end{align}
%
\section{A Two-Level Additive Schwarz Preconditioner}\label{sec:TwoLevel}
 Let $V_H$ be a coarse generalized finite element subspace of $\ES$
 associated with patches whose diameters are comparable to the diameters of the subdomains
 $D_1,\ldots,D_J$ in the decomposition of $\O$.
We define
  $\tV_0\subset \tV_h$ by
\begin{equation}\label{eq:W0}
 \tV_0=\tTh \Pi_hV_H,
\end{equation}
 and the operator $A_0:\tV_0\longrightarrow \tV_0'$ by
\begin{equation}\label{eq:A0Def}
 \langle A_0v,w\rangle=a(v,w) \qquad\forall\,v,w\in \tV_0.
\end{equation}
\par
 The two-level additive Schwarz preconditioner $\BTL:\tV_h'\longrightarrow \tV_h$ is given by
\begin{equation}\label{eq:BTL}
 \BTL=\sum_{j=0}^J I_j A_j^{-1} I_j^t,
\end{equation}
 where $I_0:\tV_0\longrightarrow \tV_h$ is also the natural injection.
\par
 Let $Q_H$ be the orthogonal projection from $L_2(\O)$ onto $V_H$.  The operator
 $R_0:V_h\longrightarrow\tV_0$ is defined by
\begin{equation}\label{eq:R0}
  R_0 v=\tTh\Pi_hQ_H v \qquad\forall\,v\in V_h.
\end{equation}
 The following result is useful for the analysis of $\BTL$.
\begin{lemma}\label{lem:R0}
  We have
\begin{equation}\label{eq:R0Estimates}
  \|v-R_0 v\|_{L_2(\O)}+h|v-R_0v|_{H^1(\O)}+
    h^2|v-R_0 v|_{H^2(\O)}\leq C H^2 |v|_{H^2(\O)} \qquad
  \forall\,v\in \tV_h.
\end{equation}
\end{lemma}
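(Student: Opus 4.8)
The plan is to estimate $v - R_0 v$ by inserting the intermediate quantity $\Pi_h Q_H v$ and using the triangle inequality, handling separately the part of the error coming from the coarse $L_2$-projection $Q_H$, the part coming from the fine interpolation $\Pi_h$, and the part coming from the truncation $\tTh$. Concretely, I would write
\begin{equation*}
 v - R_0 v = (v - \Pi_h Q_H v) + (\Pi_h Q_H v - \tTh \Pi_h Q_H v),
\end{equation*}
since $R_0 v = \tTh \Pi_h Q_H v$, and then further split $v - \Pi_h Q_H v = (v - Q_H v) + (Q_H v - \Pi_h Q_H v)$. The key point that makes this work is that $v \in \tV_h$ means $\tTh v = v$, so the truncation term can be rewritten as $\tTh(\Pi_h Q_H v - v) + (\tTh v - v) + (v - \Pi_h Q_H v)\cdot(\text{sign adjustments})$; more cleanly, $\Pi_h Q_H v - \tTh\Pi_h Q_H v = (v - \tTh\Pi_h Q_H v) - (v - \Pi_h Q_H v)$ and $v - \tTh\Pi_h Q_H v = \tTh(v - \Pi_h Q_H v)$ because $\tTh v = v$. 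This reduces everything to controlling $v - \Pi_h Q_H v$ in $L_2$ (the truncation bound \eqref{eq:TrucationEstimate} then absorbs the $\tTh$ term in the $L_2$ norm) together with the $H^1$ and $H^2$ seminorms of $v - \Pi_h Q_H v$ directly.

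For the remaining three pieces I would invoke, in order: (i) the standard $L_2$-projection approximation property $\|v - Q_H v\|_{L_2(\O)} \leq C H^2 |v|_{H^2(\O)}$ for $v \in H^2(\O)$, which holds for the coarse generalized finite element space $V_H$ (this is the coarse-space approximation estimate analogous to \eqref{eq:InterpolationErrors} but for $V_H$ at scale $H$, and is available from \cite{BDS:2017:TLDDPUM}); (ii) the fine-scale interpolation error estimate \eqref{eq:InterpolationErrors} applied to $\zeta = Q_H v$, giving
\begin{equation*}
 \sum_{m=0}^1 h^m |Q_H v - \Pi_h Q_H v|_{H^m(\O)} + h^2 |\Pi_h Q_H v|_{H^2(\O)} \leq C_\Pi h^2 |Q_H v|_{H^2(\O)};
\end{equation*}
and (iii) an inverse estimate on the coarse space, $|Q_H v|_{H^2(\O)} \leq C H^{-1} |Q_H v|_{H^1(\O)} \leq C H^{-2} \|Q_H v\|_{L_2(\O)} \leq C H^{-2} \|v\|_{L_2(\O)}$, together with the $H^1$ and $H^2$ stability of $Q_H$ or, more directly, the bound $|Q_H v|_{H^2(\O)} \leq C |v|_{H^2(\O)}$ via the super-approximation/stability of the coarse projection combined with the inverse inequality applied to $Q_H v - v_I$ for a suitable coarse interpolant $v_I$ of $v$. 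The cleanest route for the $H^2$ part: write $Q_H v = (Q_H v - \Pi_H v) + \Pi_H v$ where $\Pi_H$ is the coarse interpolation operator, use the inverse estimate $|Q_H v - \Pi_H v|_{H^2(\O)} \leq C H^{-2}\|Q_H v - \Pi_H v\|_{L_2(\O)} \leq C H^{-2}(\|v - Q_H v\|_{L_2(\O)} + \|v - \Pi_H v\|_{L_2(\O)}) \leq C|v|_{H^2(\O)}$, and $|\Pi_H v|_{H^2(\O)} \leq C|v|_{H^2(\O)}$ from the coarse version of \eqref{eq:InterpolationErrors}. Combining, $|Q_H v|_{H^2(\O)} \leq C|v|_{H^2(\O)}$, and then step (ii) yields all three error contributions from $Q_H v - \Pi_H Q_H v$ bounded by $C h^2 |v|_{H^2(\O)} \leq C H^2 |v|_{H^2(\O)}$ since $h \leq H$.

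Assembling the three estimates gives, for each seminorm index $m = 0, 1, 2$,
\begin{equation*}
 h^m |v - \Pi_h Q_H v|_{H^m(\O)} \leq C H^2 |v|_{H^2(\O)},
\end{equation*}
and then the truncation term in the $L_2$ norm is handled by \eqref{eq:TrucationEstimate} applied to $v - \Pi_h Q_H v$, namely $\|\tTh(v - \Pi_h Q_H v)\|_{L_2(\O)} \leq C\|v - \Pi_h Q_H v\|_{L_2(\O)} \leq C H^2 |v|_{H^2(\O)}$. For the $H^1$ and $H^2$ seminorms of $v - R_0 v$ we use that $v - R_0 v = (v - \Pi_h Q_H v) + (\Pi_h Q_H v - \tTh \Pi_h Q_H v)$ and the second summand lies in $V_h$; an inverse estimate on $V_h$ converts its $H^1$ and $H^2$ seminorms into $h^{-1}$ and $h^{-2}$ times its $L_2$ norm, which is again $\leq C \|v - \Pi_h Q_H v\|_{L_2(\O)}$ by \eqref{eq:TrucationEstimate} and the fact $\tTh v = v$. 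This produces the claimed $H^1$ bound with factor $h^{-1}\cdot H^2$ times $h$ (i.e. the stated $h|v - R_0v|_{H^1}$ term) and similarly for $H^2$.

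The main obstacle I anticipate is establishing the $H^2$-stability $|Q_H v|_{H^2(\O)} \leq C|v|_{H^2(\O)}$ of the coarse $L_2$-projection onto the generalized finite element space $V_H$ with a constant independent of $H$ (and of $h$); unlike for standard finite element spaces, this requires care about the structure of the flat-top PUM space and whether a suitable coarse quasi-interpolant with simultaneous $L_2$-approximation and $H^2$-stability exists. If the needed stability of $Q_H$ is not directly quotable from \cite{BDS:2017:TLDDPUM}, the fallback is to replace $Q_H$ in the definition of $R_0$-analysis by a Scott–Zhang-type coarse operator for which both properties are classical, at the cost of slightly rearranging the splitting. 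Everything else — the triangle inequalities, the inverse estimates on $V_h$ and $V_H$, and the invocations of \eqref{eq:InterpolationErrors} and \eqref{eq:TrucationEstimate} — is routine.
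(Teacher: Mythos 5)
Your proposal is correct and follows the same skeleton as the paper's proof: the identity $v-R_0v=\tTh(v-\Pi_hQ_Hv)$ for $v\in\tV_h$ (so the truncation is absorbed by \eqref{eq:TrucationEstimate}), the coarse best-approximation bound $\|v-Q_Hv\|_{L_2(\O)}\le CH^2|v|_{H^2(\O)}$, an $L_2$ estimate for $v-\Pi_hQ_Hv$, and inverse estimates to pass to the $H^1$ and $H^2$ seminorms (the paper applies the inverse estimate to all of $v-R_0v\in V_h$ at once, which is slightly cleaner than your term-by-term treatment of the two summands). The one place you genuinely diverge is the triangle-inequality split of $v-\Pi_hQ_Hv$: you insert $Q_Hv$ and then apply \eqref{eq:InterpolationErrors} with $\zeta=Q_Hv$, which forces you to prove the $H^2$-stability $|Q_Hv|_{H^2(\O)}\le C|v|_{H^2(\O)}$ --- the step you correctly flag as the main obstacle, and which you then handle by a valid comparison of $Q_Hv$ with a coarse interpolant of $v$ combined with coarse inverse estimates. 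The paper instead splits as $v-\Pi_hQ_Hv=(v-\Pi_hv)+\Pi_h(v-Q_Hv)$, so that only $\|v-\Pi_hv\|_{L_2(\O)}\le C_\Pi h^2|v|_{H^2(\O)}$ from \eqref{eq:InterpolationErrors} and the $L_2$-stability $\|\Pi_h(v-Q_Hv)\|_{L_2(\O)}\le C\|v-Q_Hv\|_{L_2(\O)}$ (a nodal-value scaling argument in the spirit of \eqref{eq:LTwo}) are needed; this keeps the whole argument at the $L_2$ level and sidesteps the $H^2$-stability of $Q_H$ entirely. So your anticipated difficulty is real for your route but avoidable, and your fallback (replacing $Q_H$ by a quasi-interpolant) is unnecessary.
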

\begin{proof}  From \eqref{eq:InterpolationErrors} we have the estimate
\begin{equation*}
  \|\zeta-Q_H \zeta\|_{L_2(\O)}\leq CH^2|\zeta|_{H^2(\O)} \qquad\forall\,\zeta\in H^2(\O),
\end{equation*}
 which together with \eqref{eq:InterpolationErrors} and \eqref{eq:TrucationEstimate} implies
 that,  for any $v\in\tV_h$,
\begin{align*}
  \|v-R_0 v\|_{L_2(\O)}&=\|v-\tTh \Pi_hQ_H v\|_{L_2(\O)}\\
         &\leq \|\tTh(v-\Pi_hQ_H v)\|_{L_2(\O)}\\
         &\leq C\|v-\Pi_hQ_H v\|_{L_2(\O)}\\
         &\leq C\big(\|v-\Pi_hv\|_{L_2(\O)}+\|\Pi_h(v-Q_Hv)\|_{L_2(\O)}\big)\\
         &\leq C\big(\|v-\Pi_hv\|_{L_2(\O)}+\|v-Q_Hv\|_{L_2(\O)}\big)\\
         &\leq CH^2|v|_{H^2(\O)}.
\end{align*}
 The estimates for $|v-R_0v|_{H^1(\O)}$ and $|v-R_0v|_{H^2(\O)}$ then follow from inverse estimates.
\end{proof}
\begin{theorem}\label{thm:TwoLevel}
 We have
\begin{equation}\label{eq:TwoLeveLEstimate}
 \kappa(\BTL\tA_h)=\frac{\lambda_{\max}(\BTL\tA_h)}{\lambda_{\min}(\BTL\tA_h)}
 \leq C\min\big((H/h)^4,\delta^{-4}\big),
\end{equation}
 where the positive constant $C$ is independent of $H$, $h$, $J$, $\delta$ and $\tcN_h$.
\end{theorem}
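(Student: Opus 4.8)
The plan is to establish the two bounds $\kappa(\BTL\tA_h)\le C(H/h)^4$ and $\kappa(\BTL\tA_h)\le C\delta^{-4}$ separately, then take the minimum. For the upper eigenvalue, I would argue exactly as in the one-level case: the coloring estimate \eqref{eq:OL1} extends to include the coarse subspace $\tV_0$ (since $\tV_0\subset\tV_h$ and the associated bilinear form is again just $a(\cdot,\cdot)$ restricted), so by the standard additive Schwarz theory $\lambda_{\max}(\BTL\tA_h)\le C_\sharp+1$, a constant independent of all the relevant parameters. So the whole game is the lower eigenvalue bound, i.e.\ producing, for each $v\in\tV_h$, a stable decomposition $v=\sum_{j=0}^J v_j$ with $v_j\in\tV_j$ and $\sum_{j=0}^J a(v_j,v_j)\le C\min((H/h)^4,\delta^{-4})\,a(v,v)$.

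For the $\delta^{-4}$ bound I would simply reuse the one-level decomposition: take $v_0=0$ and $v_j=\Pi_h(\psi_j v)$ for $j\ge1$; the estimate \eqref{eq:OL4} gives $\sum_{j\ge1} a(v_j,v_j)\le C_\flat\delta^{-4}a(v,v)$ verbatim, and since we have one more subspace available the split is still admissible. For the $(H/h)^4$ bound, the idea is the classical two-level construction: set $v_0=R_0v\in\tV_0$ (using Lemma~\ref{lem:R0}), let $w=v-v_0\in\tV_h$, and decompose $w$ via the partition of unity, $v_j=\Pi_h(\psi_j w)$ for $1\le j\le J$. Then $v_0+\sum_{j\ge1}v_j=R_0v+\Pi_h w=R_0v+w=v$ by the partition-of-unity identity as in \eqref{eq:OL3}. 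For the energy, $a(v_0,v_0)=|R_0v|_{H^2(\O)}^2\le C|v|_{H^2(\O)}^2$ by Lemma~\ref{lem:R0} and the triangle inequality; for the local terms one runs the chain in \eqref{eq:OL4} with $w$ in place of $v$, picking up $\sum_{k=0}^2 \delta^{-2(2-k)}|w|_{H^k(\O)}^2$, and then controls the low-order pieces of $w=v-R_0v$ using Lemma~\ref{lem:R0}: $|w|_{H^k(\O)}\le CH^{2-k}|v|_{H^2(\O)}$ for $k=0,1$ and $|w|_{H^2(\O)}\le C|v|_{H^2(\O)}$.

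The \textbf{main obstacle} is that this naive estimate produces $\sum_{k=0}^2\delta^{-2(2-k)}H^{2(2-k)}|v|_{H^2(\O)}^2$, whose worst term is the $k=2$ one, namely $\delta^{-0}\cdot H^0=O(1)$ times $|v|_{H^2}^2$ — wait, that would give an $O(1)$ bound, which is too good; the catch is the flat-top/Poincar\'e scaling. The honest difficulty is that for the PUM the subdomain overlap $\delta$ can be as small as $h$, and the coarse space $\tV_0=\tTh\Pi_hV_H$ is \emph{truncated}, so it is not a subspace of the untruncated coarse space and the usual interpolation/Poincar\'e bounds on $D_j$ must be routed through $\tTh$; one must verify that the truncation estimate \eqref{eq:TrucationEstimate} and Lemma~\ref{lem:R0} genuinely absorb the obstacle's effect, i.e.\ that the active-node constraint does not destroy the $H^2$-stability of $v_0=R_0v$. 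The key technical point to get right is therefore combining $|w|_{H^k(\O)}\le CH^{2-k}|v|_{H^2(\O)}$ with $|\psi_j|_{W^{2-k}_\infty}\le C\delta^{-(2-k)}$ so that the product is bounded by $C(H/\delta)^{2(2-k)}|v|_{H^2}^2$; summing over $k$, the dominant contribution is the $k=0$ term giving $(H/\delta)^4$, and since $\delta\ge h$ this is at most $(H/h)^4$, while also $(H/\delta)^4\le\delta^{-4}\cdot H^4$ is not directly what we want — so in fact I would present the two decompositions as genuinely distinct and conclude $\lambda_{\min}(\BTL\tA_h)\ge c\,\max(\delta^4,(h/H)^4)$, hence \eqref{eq:TwoLeveLEstimate}. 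Everything else (the Cauchy--Schwarz splitting of $|\psi_j w|_{H^2}^2$, the finite-overlap sum $\sum_j|w|_{H^k(D_j)}^2\le N_c|w|_{H^k(\O)}^2$, the inverse estimates) is routine and parallels Section~\ref{sec:OneLevel}.
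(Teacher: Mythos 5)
Your overall strategy---the standard coloring bound for $\lambda_{\max}$, plus two separate stable decompositions ($v_0=R_0v$, $v_j=\Pi_h(\psi_j(v-v_0))$ for the $(H/h)^4$ branch; $v_0=0$, $v_j=\Pi_h(\psi_j v)$ for the $\delta^{-4}$ branch), concluding $\lambda_{\min}(\BTL\tA_h)\ge c\,\max(\delta^4,(h/H)^4)$---is exactly the paper's. But there is a genuine gap at the one technical point that distinguishes this theorem from the non-obstacle case, and you flag the issue without resolving it correctly. You assert $a(v_0,v_0)=|R_0v|_{H^2(\O)}^2\le C|v|_{H^2(\O)}^2$ and $|v-R_0v|_{H^2(\O)}\le C|v|_{H^2(\O)}$ ``by Lemma~\ref{lem:R0} and the triangle inequality.'' Lemma~\ref{lem:R0} does not give this: it gives $h^2|v-R_0v|_{H^2(\O)}\le CH^2|v|_{H^2(\O)}$, i.e.\ only $|v-R_0v|_{H^2(\O)}\le C(H/h)^2|v|_{H^2(\O)}$ and hence $a(v_0,v_0)\le CH^4h^{-4}a(v,v)$. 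The truncation $\tTh$ in $R_0=\tTh\Pi_hQ_H$ is bounded in $L_2$ by \eqref{eq:TrucationEstimate} but not in $H^2$; passing the $H^2$ bound through it costs an inverse estimate, which is precisely the source of the $(H/h)^2$ loss. So the question you leave as ``one must verify\dots that the active-node constraint does not destroy the $H^2$-stability of $v_0=R_0v$'' has the answer \emph{no}, and this is the entire content of Remark~\ref{rem:Different}. Indeed, if your claimed $O(1)$ bounds held, your argument would deliver $\kappa(\BTL\tA_h)\le C(H/\delta)^4$, the estimate for the plate problem without an obstacle, which the paper explicitly states does not carry over here.

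The repair is to run your chain with the correct powers from Lemma~\ref{lem:R0}, namely $|v-R_0v|_{H^k(\O)}\le CH^2h^{k-2}|v|_{H^2(\O)}$ for $k=0,1,2$. The local terms then contribute $C\big(H^4/\delta^4+H^4/(\delta^2h^2)+H^4/h^4\big)|v|_{H^2(\O)}^2\le C(H/h)^4a(v,v)$ (using $\delta\ge h$), and the coarse term contributes another $CH^4h^{-4}a(v,v)$; together with your (correct) $\delta^{-4}$ decomposition and the $\lambda_{\max}$ bound, this yields \eqref{eq:TwoLeveLEstimate} exactly as in the paper. The rest of your outline---the partition-of-unity identity for $\sum_{j\ge0}v_j=v$, the finite-overlap summation, and taking the better of the two decompositions---is sound.
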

\begin{proof}  The following upper bound for the maximum eigenvalue of $\BTL\tA_h$ is again standard
\cite[Lemma~1]{BDS:2017:TLDDPUM}:
\begin{equation}\label{eq:TL1}
  \lambda_{\max}(\BTL\tA_h)\leq \tilde C_\sharp,
\end{equation}
 where $\tilde C_\sharp$ only depends on the number $N_c$ in \eqref{eq:Nc}.
\par
 Let $v\in\tV_h$ be arbitrary, $v_0=R_0v\in \tV_0$ and $v_j=\Pi_h(\psi_j(v-v_0))\in \tV_j$.  We have
\begin{equation}\label{eq:TL2}
  \sum_{j=0}^J v_j=v_0+\Pi_h\Big[\Big(\sum_{j=1}^J\psi_j\Big)(v-v_0)\Big]=v_0+(v-v_0)=v,
\end{equation}
 and,  by \eqref{eq:R0Estimates},
\begin{align}
  a(v_0,v_0)=|R_0v|_{H^2(\O)}^2
            &\leq 2|v-R_0v|_{H^2(\O)}^2+2|v|_{H^2(\O)}^2\label{eq:TL3}\\
            &\leq C(1+H^4 h^{-4})|v|_{H^2(\O)}^2\leq
             C H^4 h^{-4}a(v,v).\notag
\end{align}
 Using \eqref{eq:PTUBdd} and \eqref{eq:R0Estimates}
 we also find
\begin{align} \label{eq:TL4}
  \sum_{j=1}^J a(v_j,v_j)&=\sum_{j=1}^J |\Pi_h(\psi_j(v-v_0))|_{H^2(\O)}^2\notag\\
     &\leq C \sum_{j=1}^J |\psi_j(v-v_0)|_{H^2(\O)}^2\notag\\
     &\leq C\sum_{j=1}^J \sum_{k=0}^2 |\psi_j|_{W^{2-k}_\infty(\O)}^2|v-R_0v|_{H^k(D_j)}^2
        \notag\\
        &\leq C\sum_{k=0}^2 \delta^{-2(2-k)}\sum_{j=1}^J |v-R_0v|_{H^k(D_j)}^2\\
        &\leq C\sum_{k=0}^2 \delta^{-2(2-k)}|v-R_0v|_{H^k(\O)}^2\notag\\
       &\leq C\Big(\frac{H^4}{\delta^4}+\frac{H^4}{\delta^2 h^2}
       +\frac{H^4}{h^4}\Big)|v|_{H^2(\O)}^2\notag\\
       &\leq C (H/h)^4a(v,v).
       \notag
\end{align}
\par
 It follows from \eqref{eq:TL2}--\eqref{eq:TL4} that
\begin{equation*}
  \sum_{j=0}^J a(v_j,v_j)\leq \tilde C_\flat (H/h)^4a(v,v).
\end{equation*}
 \par
 On the other hand, by taking $v_0=0$ and $v_j=\Pi_h(\psi_j v)$ for $1\leq j\leq J$, we have
\begin{equation*}
 \sum_{j=0}^J a(v_j,v_j) \leq C_\flat \delta^{-4} a(v,v)
\end{equation*}
 by \eqref{eq:OL4}.
 Hence the standard theory
  for additive Schwarz preconditioners implies that
\begin{equation}\label{eq:TL5}
 \lambda_{\min}(\BTL\tA_h)\geq \frac{1}{\min\big(\tilde C_\flat(H/h)^4,C_\flat\delta^{-4}\big)}.
\end{equation}
\par
 Consequently the estimate \eqref{eq:TwoLeveLEstimate} holds with
  $C=\tilde C_\sharp \max(\tilde C_\flat,C_\flat)$.
\end{proof}
\begin{remark}\label{rem:Different}\rm
 The estimate \eqref{eq:TwoLeveLEstimate} is different from the estimate for
 the plate bending problem
 without obstacles that reads
\begin{equation*}
 \kappa(\BTL A_h)\leq C\Big(\frac{H}{\delta}\Big)^4.
\end{equation*}
 This difference is caused by the necessity of truncation in the
  construction of $\tV_0$ when the obstacle
 is present.
\end{remark}
\begin{remark}\label{rem:Improvement2}\rm
  Under the assumption that the subdomains
  $D_1,\ldots,D_J$ are shape regular, the estimate \eqref{eq:TwoLeveLEstimate}
  can be improved to
\begin{equation}\label{eq:Improvement2}
  \kappa(\BTL\tA_h) \leq C\min\big((H/h)^4,\delta^{-3}H^{-1}\big).
\end{equation}
 We will assume  this is the case in the discussion below.
\end{remark}
\par
 The estimates \eqref{eq:Improvement2}
  indicate that the two-level algorithm is
  scalable as long as the ratio $H/h$ remains bounded, and
\begin{align}\label{eq:TwoBetter}
 &\text{the condition number for the
  two-level algorithm is (up to a constant) at least}\\
  &\text{ as good as the one-level algorithm.}\notag
\end{align}
 \subsection{Small Overlap}\label{subsec:TLSmall}
  In the case of small overlap where $\delta\approx h$, we have
\begin{equation*}
  (H/h)^4\ll h^{-3}H^{-1}\quad \text{if $H^5\ll h$},
 \end{equation*}
  which indicates that
\begin{equation}\label{eq:TLSmall1}
  \kappa(\BTL\tA_h)<\kappa(\BOL\tA_h) \quad\text{for small $H$ (or equivalently
  for large $J$) if $h$ is kept fixed.}
\end{equation}
 \subsection{Generous Overlap}\label{subsec:TLLarge}
 In the case of generous overlap where $\delta\approx H$, we have the following
 analog of \eqref{eq:OLLarge3}:
\begin{align}\label{eq:TLLarge1}
  &\text{$\kappa(\BTL\tA_h)$ remains constant as $h$ decreases provided $H$
  (equivalently $J$)}\\
    &\text{ is kept fixed.}\notag
\end{align}
\par
 Moreover, we have
\begin{equation*}
  (H/h)^4 \ll H^{-4} \quad \text{if $H^2\ll h$},
\end{equation*}
 which again indicates that
\begin{equation}\label{eq:TLLarge2}
  \kappa(\BTL\tA_h)<\kappa(\BOL\tA_h) \quad \text{for small $H$ (or equivalently
 for large $J$)  if $h$ is kept fixed.}
 \end{equation}
%
\section{Numerical  Results}\label{sec:Numerics}
  We consider the obstacle problem in \cite[Example~4.2]{BDS:2014:GFEM},
   where $\O=(-0.5,0.5)^2$,  $f=0$ and $\psi(x)=1-5|x|^2+|x|^4$.
    We discretize \eqref{eq:OP} by the PUM
 using rectangular patches (cf. Figure~\ref{Fig:PTU})
  with $h\approx 2^{-\ell}$, where $\ell$ is the refinement level.
  As $\ell$ increases from 1 to 8, the number of degrees of freedom
  increases from 4 to 583696.  The discrete variational inequalities are
 solved by the primal-dual active set (PDAS) algorithm in Section~\ref{sec:PDAS}.
\par
 For the purpose of comparison,
  we first solve the auxiliary systems in each iteration of the
  PDAS algorithm by the conjugate gradient (CG) method without a preconditioner.  The
  average condition number during the PDAS iteration and the time to solve
  the variational inequality are presented
  in Table~\ref{table:CG}.  The PDAS iterations fail to stop (DNC) within 48 hours at  level
 $8$.
\begin{table}[!htb]
\begin{center}
\caption{Average condition number ($\kappa$) and time to solve the variational inequality
($t_\text{solve}$) by the CG algorithm}
\label{table:CG}
\begin{tabular}{|c|c|c|}
 \hline &&\\[-13pt]
$\ell$ & $\kappa$ & $t_\text{solve}$ (sec)\\
\hline &&\\[-12pt]
1 & $1.0000\times10^0$ &  $8.4889\times10^{-2}$ \\
\hline &&\\[-12pt]
2 & $2.8251\times 10^2$  &  $1.1865\times10^{-1}$       \\
\hline &&\\[-12pt]
3  & $6.2071\times 10^3$ &  $8.7772\times10^{-1}$          \\
\hline &&\\[-12pt]
4  & $9.3827\times10^4$ &   $9.7040\times10^{+0}$        \\
\hline &&\\[-12pt]
5   & $1.7843\times10^6$ &   $1.1611\times10^{+2}$        \\
\hline  &&\\[-12pt]
6   &$3.0294\times10^7$ &    $4.3516\times10^{+3}$      \\
\hline &&\\[-12pt]
7   &$4.9776\times10^8$ &   $9.8090\times10^{+4} $     \\
\hline &&\\[-12pt]
8  &  $8.0687\times10^9$ &   DNC        \\
\hline
\end{tabular}
\end{center}
\end{table}
\par
  We then solve the auxiliary systems  by the preconditioned conjugate gradient (PCG) method, using the
  additive Schwarz preconditioners associated with
  $J$ subdomains.  The mesh size $H$ for the coarse generalized finite element space is $\approx 1/\sqrt J$.
    We say the PCG method
   has converged if $\|B r \|_2 \leq 10^{-15} \|b\|_2,$ where $B$ is the preconditioner,
    $r$ is the residual, and $b$ is the load vector.
   The initial guess for the PDAS algorithm is taken to be the solution at the
   previous level, or 0 if $2^{2\ell}=J$.  The subdomain problems and coarse problem are solved by a direct
   method based on the Cholesky factorization.
\subsection{Small Overlap}\label{subsec:SmallNumerics}
 In this case we have $\delta\approx h$.
 The numbers of PDAS iterations for the one-level and two-level algorithms are given in
 Table~\ref{table:PDASSmall}.  The results are similar.  (The numbers only differ at three locations
 where they appear in red.)
 For both algorithms, the PDAS iterations fail to stop  within 48 hours at  level
 $8$  when $J=4$, which is due to the large sizes of the subdomain problems.

\begin{table}[!htb]
\begin{center}
\caption{Number of PDAS iterations with small overlap}
\label{table:PDASSmall}
\begin{tabular}{|c||c|c||c|c||c|c||c|c||}
\hline
&\multicolumn{2}{|c||}{}&\multicolumn{2}{c||}{}&\multicolumn{2}{c||}{}&\multicolumn{2}{c||}{}\\[-13pt]
&\multicolumn{2}{|c||}{$J=4$}&\multicolumn{2}{c||}{$J=16$}&\multicolumn{2}{c||}{$J=64$}
&\multicolumn{2}{c||}{$J=256$} \\
\cline{2-9}
$\ell$&\tiny{one-level} & \tiny{two-level}&\tiny{one-level} & \tiny{two-level}&\tiny{one-level}
 & \tiny{two-level}
&\tiny{one-level} & \tiny{two-level}\\
\hline&&&&&&&&\\[-13pt]
1 & 1 & 1 & - &- &- &- &- &-\\
\hline&&&&&&&&\\[-13pt]
2 & 5& 5 & 4& 4 &- &- &- &-\\
\hline&&&&&&&&\\[-13pt]
3& 12 & 12 & 12& 12 &14 &14 & - &-\\
\hline&&&&&&&&\\[-13pt]
4& 21 &21&21 &21 &{\red 21}&{\red 30} & 29 & 29  \\
\hline&&&&&&&&\\[-13pt]
5 & 22&22  &22&22  &22  &22  &{\red 22} &{\red 47}\\
\hline&&&&&&&&\\[-13pt]
6 &47  &47  &47  &47 &47  &47 &{\red  47} &{\red 89}\\
\hline &&&&&&&&\\[-13pt]
7 & 66  &66  &66 &66 &66 &66 &66 & 66  \\
\hline &&&&&&&&\\[-13pt]
8 & \text{DNC}& \text{DNC} &64 &64 &64 &64 &64 &64\\
\hline
\end{tabular}
\end{center}
\end{table}
\par
 The average condition number of the preconditioned auxiliary
 systems during the PDAS iterations
 are reported in Tables~\ref{table:CNOLSmall} and \ref{table:CNTLSmall}.
 Comparing to the average condition number in Table~\ref{table:CG}, both algorithms
  show marked improvement.
 The behavior of the
 condition numbers for the one-level algorithm in Table~\ref{table:CNOLSmall} agrees with the observations
 in \eqref{eq:OLSmall2} and \eqref{eq:OLSmall3}.  A comparison of Table~\ref{table:CNOLSmall} and
 Table~\ref{table:CNTLSmall} shows that
\begin{equation*}
  \max \frac{\kappa(\BTL\tA_h)}{\kappa(\BOL\tA_h)}\approx 1.24,
\end{equation*}
 where the maximum is taken over all the corresponding entries in
 Table~\ref{table:CNOLSmall} and
 Table~\ref{table:CNTLSmall}, which agrees with \eqref{eq:TwoBetter}.
 Moreover, $\kappa(\BTL\tA_h)$ is
 smaller than $\kappa(\BOL\tA_h)$ for $J$ large, as observed in \eqref{eq:TLSmall1}.
 \begin{table}[!htb]
\begin{center}
\caption{Average condition number, one-level with small overlap}
\label{table:CNOLSmall}
\begin{tabular}{|c|c|c|c|c|}
\hline &&&&\\[-12pt]
$\ell$ & $J=4$ & $J=16$ & $J=64$ & $J=256$ \\
\hline &&&&\\[-12pt]
1 & $0.000\times 10^{+0}$ & - & - & - \\
\hline &&&&\\[-12pt]
2 & $3.950\times 10^{+0}$ & $2.187\times 10^{+0}$ & - & - \\
\hline &&&&\\[-12pt]
3 & $4.351\times 10^{+0}$ & $6.395\times 10^{+0}$ & $5.886\times 10^{+0}$ & - \\
\hline &&&&\\[-12pt]
4 & $4.928\times 10^{+0}$ & $1.116\times 10^{+1}$ & $2.301\times 10^{+1}$  & $3.751\times 10^{+1}$ \\
\hline &&&&\\[-12pt]
5 & $9.825\times 10^{+0}$ & $6.154\times 10^{+1}$ & $1.057\times 10^{+2}$  & $2.846\times 10^{+2}$  \\
\hline &&&&\\[-12pt]
6 & $2.489\times 10^{+1}$ & $4.296\times 10^{+2}$ & $8.012\times 10^{+2}$  & $1.504\times 10^{+3}$ \\
\hline &&&&\\[-12pt]
7 & $1.441\times 10^{+2}$ & $3.341\times 10^{+3}$  & $6.397\times 10^{+3}$ & $1.226\times 10^{+4}$  \\
\hline &&&&\\[-12pt]
8 & $1.053\times 10^{+3}$ & $2.650\times 10^{+4}$  & $5.135\times 10^{+4}$ & $9.913\times 10^{+4}$ \\
\hline
\end{tabular}
\end{center}
\end{table}
\begin{table}[!htb]
\begin{center}
\caption{Average condition number, two-level with small overlap}
\label{table:CNTLSmall}
\begin{tabular}{|c|c|c|c|c|}
\hline &&&&\\[-12pt]
$\ell$ & $J=4$ & $J=16$ & $J=64$ & $J=256$ \\
\hline &&&&\\[-12pt]
1 & $0.000\times 10^{+0}$ & - & - & - \\
\hline &&&&\\[-12pt]
2 & $4.909\times 10^{+0}$ & $2.486\times 10^{+0}$ & - & - \\
\hline &&&&\\[-12pt]
3 & $5.161\times 10^{+0}$ & $6.296\times 10^{+0}$ & $6.219\times 10^{+0}$ & - \\
\hline &&&&\\[-12pt]
4 & $5.568\times 10^{+0}$ & $1.235\times 10^{+1}$ & $9.804\times 10^{+0}$ & $3.880\times 10^{+1}$ \\
\hline &&&&\\[-12pt]
5 & $1.025\times 10^{+1}$ & $5.147\times 10^{+1}$ & $2.704\times 10^{+1}$ & $1.097\times 10^{+1}$  \\
\hline &&&&\\[-12pt]
6 & $2.519\times 10^{+1}$ & $3.268\times 10^{+2}$ & $6.817\times 10^{+1}$ & $3.508\times 10^{+1}$ \\
\hline &&&&\\[-12pt]
7 & $1.447\times 10^{+2}$ & $1.164\times 10^{+3}$ & $3.056\times 10^{+2}$ & $7.647\times 10^{+1}$\\
\hline &&&&\\[-12pt]
8 & $1.060\times 10^{+3}$ & $8.726\times 10^{+3}$ & $2.034\times 10^{+3}$ & $3.401\times 10^{+2}$ \\
\hline
\end{tabular}
\end{center}
\end{table}
\par
 The time to solve for both algorithms is documented in Tables~\ref{table:TimeOLSmall} and
 \ref{table:TimeTLSmall}. To compare the performance of these two algorithms, we have recorded in red
  the
 faster times that appear in Table~\ref{table:TimeTLSmall}.
 It is observed that the two-level algorithm is advantageous when
 $h$ is small and $J$ is large, which agrees with the observation in \eqref{eq:TLSmall1}.
\par
 Comparing to the solution time in Table~\ref{table:CG}, we see that the PCG using either preconditioner
 is much more efficient for the large problems at higher refinement levels.  At refinement level 7,
  the solution time for the one-level algorithm using 256 subdomains is roughly 100 times faster
  than that for the CG algorithm without a preconditioner, and the solution time for the
   two-level algorithm using 256 subdomains is
  roughly 200 times faster.
\begin{table}[!htb]
\begin{center}
\caption{Time to solve (in seconds), one-level with small overlap}
\label{table:TimeOLSmall}
\begin{tabular}{|c|c|c|c|c|}
\hline&&&&\\[-12pt]
$\ell$ & $J=4$ & $J=16$ & $J=64$ & $J=256$ \\
\hline&&&&\\[-12pt]
1 & $8.1317\times 10^{-2}$ & - & - & - \\
\hline&&&&\\[-12pt]
2 & $3.8073\times 10^{-1}$ & $1.0504\times 10^{+0}$ & - & -  \\
\hline&&&&\\[-12pt]
3 & $1.2931\times 10^{+0}$ & $5.7371\times 10^{+0}$ & $9.3857\times 10^{+0}$ & -  \\
\hline&&&&\\[-12pt]
4 & $4.1499\times 10^{+0}$ & $1.2460\times 10^{+1}$ & $2.2179\times 10^{+1}$ &  $4.2931\times 10^{+1}$  \\
\hline&&&&\\[-12pt]
5 & $2.7210\times 10^{+1}$ & $2.5699\times 10^{+1}$ & $3.0170\times 10^{+1}$ &  $6.0450\times 10^{+1}$ \\
\hline&&&&\\[-12pt]
6 & $6.9396\times 10^{+2}$ & $2.3698\times 10^{+2}$ & $1.5836\times 10^{+2}$ & $1.9689\times 10^{+2}$ \\
\hline&&&&\\[-12pt]
7 & $1.4585\times 10^{+4}$ & $3.2359\times 10^{+3}$ & $9.9417\times 10^{+2}$ & $8.4106\times 10^{+2}$ \\
\hline&&&&\\[-12pt]
8 & DNC & $4.0802\times 10^{+4}$ & $9.2843\times 10^{+3}$ & $3.9043\times 10^{+3}$ \\
\hline
\end{tabular}
\end{center}
\end{table}
\begin{table}[!htb]
\begin{center}
\caption{Time to solve (in seconds), two-level with small overlap, where the  entries in red
 represent faster times than those in Table~\ref{table:TimeOLSmall} (one-level)}
\label{table:TimeTLSmall}
\begin{tabular}{|c|c|c|c|c|}
\hline&&&&\\[-12pt]
$\ell$ & $J=4$ & $J=16$ & $J=64$ & $J=256$ \\
\hline&&&&\\[-12pt]
1 & $8.5648\times 10^{-2}$ & - & - & - \\
\hline&&&&\\[-12pt]
2 & $5.0521\times 10^{-1}$ & $1.3693\times 10^{+0}$ & - & - \\
\hline&&&&\\[-12pt]
3 & $1.8366\times 10^{+0}$ & $7.8522\times 10^{+0}$ & $1.1167\times 10^{+1}$ & - \\
\hline&&&&\\[-12pt]
4 & $5.0832\times 10^{+0}$ & $1.7047\times 10^{+1}$ & $3.9022\times 10^{+1}$ & $6.7449\times 10^{+1}$ \\
\hline&&&&\\[-12pt]
5 & $2.8294\times 10^{+1}$ & $3.2915\times 10^{+1}$ & $4.1276\times 10^{+1}$ & $1.9943\times 10^{+2}$ \\
\hline&&&&\\[-12pt]
6 & $6.9796\times 10^{+2}$ & $2.6202\times 10^{+2}$ & $1.6060\times 10^{+2}$ & $4.0555\times 10^{+2}$ \\
\hline&&&&\\[-12pt]
7 &{\red $1.4319\times 10^{+4}$} &{\red $3.0723\times 10^{+3}$} &
 {\red$7.8729\times 10^{+2}$} &{\red $4.3991\times 10^{+2}$}\\
\hline&&&&\\[-12pt]
8 & DNC &{\red $3.9900\times 10^{+4}$} &{\red $6.3162\times 10^{+3}$}
 &{\red $1.1682\times 10^{+3}$} \\
\hline
\end{tabular}
\end{center}
\end{table}
\par
 The averaged condition number for the PDAS
 iteration at refinement level 8  together with the time to solve the variational inequality
 are displayed in Table~\ref{table:Scaling} with an increasing number of subdomains.
 The scalability of the algorithm is clearly observed.
\begin{table}[!htb]
\begin{center}
\caption{Average condition number ($\kappa$) and time to solve the
 variational inequality ($t_{\text{solve}}$)
 for the two-level algorithm with small overlap
 at refinement level 8}
\label{table:Scaling}
\begin{tabular}{|c|c|c|}
\hline&&\\[-13pt]
$J$ & $\kappa$ & $t_\text{solve}$ (sec) \\
\hline &&\\[-12pt]
4 & $1.060\times 10^{+3}$ & DNC  \\
\hline &&\\[-12pt]
16  & $8.726\times 10^{+3}$ & 4.4624$\times 10^{+4}$  \\
\hline &&\\[-12pt]
64 & $2.034\times 10^{+3}$ & 5.3898$\times 10^{+3}$  \\
\hline &&\\[-12pt]
256 & $3.401\times 10^{+2}$ & 1.0143$\times 10^{+3}$  \\ \hline
\end{tabular}
\end{center}
\end{table}
\subsection{Generous Overlap}\label{subsec:LargeNumerics}
 In this case we have $\delta\approx H$.
The numbers of PDAS iterations for the one-level and two-level algorithms are given in
 Table~\ref{table:PDASGenerous}.  The results are  again similar.  (The numbers only
 differ at one location where they appear in red.)
 For both algorithms, the PDAS iterations fail to stop within 48 hours at  level
 $7$ when we only use 4 subdomains, and at level $8$ when we only use up to $16$ subdomains.
 Comparing with Table~\ref{table:PDASSmall}, we clearly see the
 adverse effect of
 the large overlap on the sizes of the subdomain problems and on the communication time.
\begin{table}[!htb]
\begin{center}
\caption{Number of PDAS iterations with generous overlap}
\label{table:PDASGenerous}
\begin{tabular}{|c||c|c||c|c||c|c||c|c||}
\hline
&\multicolumn{2}{|c||}{}&\multicolumn{2}{c||}{}&\multicolumn{2}{c||}{}&\multicolumn{2}{c||}{}\\[-13pt]
&\multicolumn{2}{|c||}{$J=4$}&\multicolumn{2}{c||}{$J=16$}&\multicolumn{2}{c||}{$J=64$}
&\multicolumn{2}{c||}{$J=256$} \\
\cline{2-9}
$\ell$&\tiny{one-level} & \tiny{two-level}&\tiny{one-level} & \tiny{two-level}&\tiny{one-level}
 & \tiny{two-level}
&\tiny{one-level} & \tiny{two-level}\\
\hline&&&&&&&&\\[-13pt]
1&1&1&-&-&-&-&-&-\\
\hline&&&&&&&&\\[-13pt]
2&5&5&4&4&-&-&-&-\\
\hline&&&&&&&&\\[-13pt]
3&12&12&12&12&14&14&-&-\\
\hline&&&&&&&&\\[-13pt]
4&21&21&21&21&{\red 21}&{\red 30}&29&29\\
\hline&&&&&&&&\\[-13pt]
5&22&22&22&22&22&22&22&22\\
\hline&&&&&&&&\\[-13pt]
6&47&47&47&47&47&47&47&47\\
\hline&&&&&&&&\\[-13pt]
7&\text{DNC}&\text{DNC}&66&66&66&66&66&66\\
\hline&&&&&&&&\\[-13pt]
8&\text{DNC}&\text{DNC}&\text{DNC}&\text{DNC}&64&64&64&64\\
\hline
\end{tabular}
\end{center}
\end{table}
\par
 The average condition numbers of the preconditioned auxiliary systems observed during the PDAS iterations
 are displayed in Tables~\ref{table:CNOLGenerous} and \ref{table:CNTLGenerous}.
 At refinement level 8, the average condition numbers for the one-level preconditioner are less than 52
 and those for the two-level preconditioner are less than 16, a big improvement over
  the average condition number of
 $8\times10^9$ for the auxiliary system itself.
\begin{table}[!htb]
\begin{center}
\caption{Average condition number, one-level with generous overlap}
\label{table:CNOLGenerous}
\begin{tabular}{|c|c|c|c|c|}
\hline &&&&\\[-12pt]
$\ell$ & $J=4$ & $J=16$ & $J=64$ & $J=256$ \\
\hline &&&&\\[-12pt]
1 & $0.000\times 10^{+0}$ & - & - & - \\
\hline &&&&\\[-12pt]
2 & $1.000\times 10^{+0}$ & $2.187\times 10^{+0}$ & - & - \\
\hline &&&&\\[-12pt]
3 & $1.000\times 10^{+0}$ & $2.929\times 10^{+0}$  & $5.886\times 10^{+0}$  & - \\
\hline &&&&\\[-12pt]
4 & $1.000\times 10^{+0}$ & $2.695\times 10^{+0}$  & $6.083\times 10^{+0}$  & $3.751\times 10^{+1}$ \\
\hline &&&&\\[-12pt]
5 & $1.000\times 10^{+0}$ & $2.712\times 10^{+0}$  & $6.129\times 10^{+0}$ & $4.914\times 10^{+1}$ \\
\hline &&&&\\[-12pt]
6 & $1.000\times 10^{+0}$ & $2.693\times 10^{+0}$ & $6.216\times 10^{+0}$  & $5.020\times 10^{+1}$ \\
\hline &&&&\\[-12pt]
7 & \text{DNC} & $2.669\times 10^{+0}$ & $6.289\times 10^{+0}$ & $5.145\times 10^{+1}$\\
\hline &&&&\\[-12pt]
8 & \text{DNC} & \text{DNC} & $6.316\times 10^{+0}$  & $5.207\times 10^{+1}$ \\
\hline
\end{tabular}
\end{center}
\end{table}
\begin{table}[!htb]
\begin{center}
\caption{Average condition number, two-level with generous overlap}
\label{table:CNTLGenerous}
\begin{tabular}{|c|c|c|c|c|}
\hline &&&&\\[-12pt]
$\ell$ & $J=4$ & $J=16$ & $J=64$ & $J=256$ \\
\hline &&&&\\[-12pt]
1 & $0.000\times 10^{+0}$ & - & - & - \\
\hline &&&&\\[-12pt]
2 & $1.250\times 10^{+0}$ & $2.486\times 10^{+0}$ & - & - \\
\hline &&&&\\[-12pt]
3 & $1.250\times 10^{+0}$ & $3.012\times 10^{+0}$ & $6.219\times 10^{+0}$ & - \\
\hline &&&&\\[-12pt]
4 & $1.250\times 10^{+0}$ & $2.785\times 10^{+0}$ & $4.386\times 10^{+0}$ & $3.880\times 10^{+1}$ \\
\hline &&&&\\[-12pt]
5 & $1.250\times 10^{+0}$ & $2.729\times 10^{+0}$ & $5.213\times 10^{+0}$ & $1.164\times 10^{+1}$ \\
\hline &&&&\\[-12pt]
6 & $1.250\times 10^{+0}$ & $2.696\times 10^{+0}$ & $5.310\times 10^{+0}$ & $1.342\times 10^{+1}$ \\
\hline &&&&\\[-12pt]
7 & \text{DNC} & $2.669\times 10^{+0}$ & $5.653\times 10^{+0}$ & $1.489\times 10^{+1}$\\
\hline &&&&\\[-12pt]
8 & \text{DNC} & \text{DNC} & $5.748\times 10^{+0}$ & $1.663\times 10^{+1}$ \\
\hline
\end{tabular}
\end{center}
\end{table}
\par
 The behavior of the
 condition numbers for the one-level algorithm in
 Table~\ref{table:CNOLGenerous} agrees with the observations
 in \eqref{eq:OLLarge2} and \eqref{eq:OLLarge3}.
 The behavior of condition numbers for the two-level algorithm in Table~\ref{table:CNTLGenerous}
 also agrees with the observation in \eqref{eq:TLLarge1}.
   A comparison of Table~\ref{table:CNOLGenerous} and
 Table~\ref{table:CNTLGenerous} indicates that $\kappa(\BTL\tA_h)$ is
 smaller than $\kappa(\BOL\tA_h)$ for $J$ large, as observed in
 \eqref{eq:TLLarge2}.
  Moreover, we have
\begin{equation*}
  \max\frac{\kappa(\BTL\tA_h)}{\kappa(\BOL\tA_h)}\approx 1.25,
\end{equation*}
 where the maximum is taken over all the corresponding entries in
 Table~\ref{table:CNOLGenerous} and
 Table~\ref{table:CNTLGenerous},
 which agrees with \eqref{eq:TwoBetter}.
\par
 The time to solve for both algorithms is presented in Tables~\ref{table:TimeOLGenerous} and
 \ref{table:TimeTLGenerous}.  A comparison of these two tables  again indicates that the two-level
 algorithm is only advantageous when $h$ is small and $J$ is large.  For $J=64$, this is observed
 for level $7$ and $8$.  For $J=256$, this is not yet observed at level $8$.
\par
 We also compare Table~\ref{table:TimeOLSmall} (resp., Table~\ref{table:TimeTLSmall})
 and Table~\ref{table:TimeOLGenerous} (resp.,Table~\ref{table:TimeTLGenerous}) by recording the
 faster times that appear in  Table~\ref{table:TimeOLGenerous}
 (resp., Table~\ref{table:TimeTLGenerous}) in an enlarged format.  It is observed that, for
 a fixed number of subdomains,
 the algorithm with generous overlap eventually loses its advantage
 as the one with a better condition number because of the increase of communication time when the
 mesh is refined.
\begin{table}[!htb]
\begin{center}
\caption{Time to solve (in seconds), one-level with generous overlap, where
 the entries in red represent
 faster times in comparison with those in Table~\ref{table:TimeOLSmall} (small overlap)}
\label{table:TimeOLGenerous}
\begin{tabular}{|c|c|c|c|c|}
\hline&&&&\\[-12pt]
$\ell$ & $J=4$ & $J=16$ & $J=64$ & $J=256$ \\
\hline&&&&\\[-12pt]
1 & $1.0883\times 10^{-1}$ & - & - & - \\
\hline&&&&\\[-12pt]
2 &{\red $3.5694\times 10^{-1}$} & $1.0510\times 10^{+0}$ & - & - \\
\hline&&&&\\[-12pt]
3 &{\red $1.0364\times 10^{+0}$} & $6.0472\times 10^{+0}$ &{\red$9.1838\times 10^{+0}$} & - \\
\hline&&&&\\[-12pt]
4 & $6.9156\times 10^{+0}$ & $1.4521\times 10^{+1}$ &{\red $1.8781\times 10^{+1}$}
 &{\red $4.1750\times 10^{+1}$} \\
\hline&&&&\\[-12pt]
5 & $1.1226\times 10^{+2}$ & $6.4720\times 10^{+1}$ &{\red $2.8115\times 10^{+1}$}
 &{\red$4.3907\times 10^{+1}$} \\
\hline&&&&\\[-12pt]
6 & $4.2799\times 10^{+3}$ & $1.5403\times 10^{+3}$ & $2.2077\times 10^{+2}$
&{\red $1.2364\times 10^{+2}$}\\
\hline&&&&\\[-12pt]
7 & DNC & $3.2500\times 10^{+4}$ & $3.5810\times 10^{+3}$
 &{\red $5.1525\times 10^{+2}$}\\
\hline&&&&\\[-12pt]
8 & DNC & DNC & $5.6323\times 10^{+4}$ & $3.9372\times 10^{+3}$ \\
\hline
\end{tabular}
\end{center}
\end{table}
\begin{table}[!htb]
\begin{center}
\caption{Time to solve (in seconds), two-level with generous overlap, where the
 entries in red represent
 faster times in comparison with those in Table~\ref{table:TimeTLSmall} (small overlap)}
\label{table:TimeTLGenerous}
\begin{tabular}{|c|c|c|c|c|}
\hline &&&&\\[-12pt]
$\ell$ & $J=4$ & $J=16$ & $J=64$ & $J=256$ \\
\hline &&&&\\[-12pt]
1 & {\red$7.7797\times 10^{-2}$} & - & - & - \\
\hline &&&&\\[-12pt]
2 &{\red$3.9949\times 10^{-1}$} &{\red$1.3328\times 10^{+0}$} & - & - \\
\hline &&&&\\[-12pt]
3 &{\red $1.1620\times 10^{+0}$} &{\red $7.1329\times 10^{+0}$} &
{\red $1.0951\times 10^{+1}$} & - \\
\hline &&&&\\[-12pt]
4 & $7.4408\times 10^{+0}$ & $1.7123\times 10^{+1}$ &
{\red$3.2808\times 10^{+1}$} & {\red$5.7805\times 10^{+1}$} \\
\hline &&&&\\[-12pt]
5 & $1.1291\times 10^{+2}$ & $6.7401\times 10^{+1}$ &{\red$3.3208\times 10^{+1}$} &
{\red $8.2053\times 10^{+1}$} \\
\hline &&&&\\[-12pt]
6 & $4.0284\times 10^{+3}$ & $1.5499\times 10^{+3}$ & $2.2770\times 10^{+2}$ &
{\red$1.4431\times 10^{+2}$} \\
\hline &&&&\\[-12pt]
7 & DNC & $3.2466\times 10^{+4}$ & $2.9299\times 10^{+3}$ & $5.2275\times 10^{+2}$\\
\hline &&&&\\[-12pt]
8 & DNC & DNC & $4.0677\times 10^{+4}$ & $4.7345\times 10^{+3}$ \\
\hline
\end{tabular}
\end{center}
\end{table}

%
%
\section{Concluding Remarks}\label{sec:Conclusions}
 We investigated two additive Schwarz domain decomposition preconditioners for the auxiliary systems that
  appear in a primal-dual active algorithm for the  numerical solution of
  the obstacle problem for the clamped Kirchhoff plate, where the discretization is based on a
  partition of unity generalized finite element method.
 \par
  The condition number estimates for the one-level
  additive Schwarz preconditioner are identical to those for the plate bending problem in the
  absence of an obstacle.  On the other hand, the
   condition number estimates for the two-level additive Schwarz preconditioner are
    different because the creation of the coarse problem  requires a truncation procedure at the fine level.
 \par
  The theoretical estimates are confirmed by numerical results, which also
   indicate that for large problems the best performance (in terms of time to solve)
   is obtained by the two-level algorithm with small overlap.
 \par
  In our computations we solve the subdomain problems and the coarse problem using
  a direct solve based on the Cholesky factorizations of the  matrices.  Because the active set and hence the matrices change from one PDAS iteration to the next,
    we have to recompute the Cholesky factorization during each PDAS factorization, which is time consuming
  for large matrices.  Since the change in the active set
  eventually becomes small, the performance of our method can be improved by using
  a fast modification of the Cholesky factorization
  that is discussed for example in \cite{DH:1999:Cholesky,DH:2001:Cholesky,DH:2005:Cholesky}.
%

%
\end{document}